\newtheorem{Lemma}{Lemma}[section]
\newtheorem{Theorem}[Lemma]{Theorem}
\newtheorem{Proposition}[Lemma]{Proposition}
\newtheorem{Corollary}[Lemma]{Corollary}
\theoremstyle{definition}
\newtheorem{Definition}[Lemma]{Definition}
\newtheorem{Remark}[Lemma]{Remark}
\newtheorem{Example}[Lemma]{Example}
\numberwithin{equation}{section}
\title[Plesken Lie algebras for associative algebras with anti-involution]{The Plesken Lie algebra for associative algebras with anti-involution: semisimple cellular algebras}
\author{Thorsten~Holm}
\address{Thorsten Holm, Leibniz Universit\"at Hannover,
Institut f\"ur Algebra, Zahlentheorie und Diskrete Mathematik,
Fakult\"at f\"ur Mathematik und Physik,
Welfengarten 1,
D-30167 Hannover, Germany}
\email{holm@math.uni-hannover.de}
\urladdr{https://www.iazd.uni-hannover.de/de/holm}
\author{Nils~Wirries}
\address{Nils Wirries, Leibniz Universit\"at Hannover,
Institut f\"ur Algebra, Zahlentheorie und Diskrete Mathematik,
Fakult\"at f\"ur Mathematik und Physik,
Welfengarten 1,
D-30167 Hannover, Germany}
\email{nils.wirries@stud.uni-hannover.de}
\keywords{Associative algebra, cellular algebra, involution, Lie algebra, semisimple algebra}
\subjclass[2020]{05E10, 15B30, 16G10, 16W10, 17B99}
\begin{document}

\begin{abstract}
Cohen and Taylor, following an idea of Plesken, introduced a Lie algebra to the complex group algebra of a finite group
and determined its structure, based on the character theory of the group. We show how the definition of
this Plesken Lie algebra can be extended to any associative algebra with an anti-involution. After some
examples we consider semisimple cellular algebras and prove that their Plesken Lie algebras are direct sums
of orthogonal Lie algebras, the sizes of which are determined by the dimensions of the cell modules 
of the cellular algebra.   
\end{abstract}

\maketitle

\section{Introduction}

In their paper \cite{CT07}, Cohen and Taylor presented a surprising construction of a Lie algebra
which can be associated to the complex group algebra of a finite group, see also \cite{M10}.
They attribute the idea to Plesken, and hence these Lie algebras are now called Plesken Lie algebras.  
The structure of the Plesken Lie algebra of a finite group has been completely determined by Cohen and Taylor,
it is a direct sum of orthogonal, symplectic and general linear Lie algebras, where the summands 
depend on the ordinary character theory of the group, namely on the sets of irreducible characters
of real type, symplectic type and complex type (see \cite[Theorem 5.1]{CT07}).  We review this result
in Section \ref{sec:Pleskengroup}. 

The Plesken
Lie algebra of a group $G$ is the subspace of $\mathbb{C}G$ spanned by the elements $g-g^{-1}$ 
with $g\in G$. 
In Section \ref{sec:Pleskeninvolution} we show how this construction can be generalized to 
any associative algebra with an anti-involution (this has already been mentioned in \cite[Section 1.1]{M10}, but 
this paper then only deals with group algebras). In Section \ref{sec:examples} we present some 
small examples of Plesken Lie algebras for algebras other than group algebras, namely for quaternions and for 
matrix algebras over complex numbers with anti-involutions given by conjugation 
of numbers and transposition of matrices. 

A large and important class of associative algebras which naturally carry an anti-involution are the 
cellular algebras introduced by Graham and Lehrer \cite{GL96}, see also \cite{KX98} for an alternative 
definition. A crucial feature of cellular algebras is the existence of a special type of basis with 
particular properties, which are motivated by the Kazhdan-Lusztig bases of Hecke algebras. 
We recall the precise definition of a cellular algebra in Section \ref{sec:Pleskencellular}. 
Following Graham and Lehrers seminal article, there has been an extensive literature on 
cellular algebras. Many algebras which appear naturally in combinatorial representation 
theory and mathematical physics have been proven to be cellular, e.g. Ariki-Koike Hecke algebras \cite{GL96},
Hecke algebras for finite Weyl groups \cite{G07}, $q$-Schur algebras of finite type
\cite{CLX23}, Khovanov-Lauda-Rouquier (KLR) algebras \cite{MT23}, 
partition algebras \cite{Xi99}, Brauer algebras \cite{GL96}, Temperley-Lieb algebras 
\cite{GL96}, Jones algebras \cite{GL96}, and many others. 

As our main result we determine the structure of the Plesken Lie algebras for all semisimple 
cellular algebras, see Theorem \ref{thm:Plesken_cellular}. The semisimplicity assumption here is
not too restrictive as cellular algebras are often generically semisimple.
We show that these Plesken
Lie algebras are direct sums of orthogonal Lie algebras, whose sizes are given by the 
dimensions of the (non-zero) cell modules of the cellular algebra. 

In Section \ref{sec:diagram} we illustrate our main result by considering two classes of
diagram algebras, the planar rook algebras and the Temperley-Lieb algebras. All the 
planar rook algebras $PR(n)$ are semisimple and Corollary \ref{cor:planarrook} 
gives the structure of the Plesken Lie algebra of $PR(n)$ as direct sum of orthogonal 
Lie algebras. The semisimplicity of the Temperley-Lieb algebras $TL_{\delta}(n)$  
depends on the parameter $\delta$, in Corollary \ref{cor:PleskenTL} we give 
the structure of the Plesken Lie algebras of $TL_{\delta}(n)$ for a large
set of parameters. We end the section with the explicit example of the
non-semisimple Temperley-Lieb algebra $TL_0(4)$ and show that its Plesken
Lie algebra can indeed not be a direct sum of orthogonal Lie algebras. 

The results of Section \ref{sec:diagram} are part of the master thesis of the
second author \cite{W25}, where he determined the structure of the Plesken 
Lie algebras for planar rook algebras and Temperley-Lieb algebras directly, 
without using the machinery of cellular algebras. The ideas for writing this article then
grew out of further discussions following the master thesis.

\section{The Plesken Lie algebra of a finite group} \label{sec:Pleskengroup}

In \cite{CT07}, Cohen and Taylor study a certain Lie algebra associated to any finite group. 
The authors attribute the definition of this Lie algebra to Plesken, and hence this Lie algebra is
now often called the Plesken Lie algebra of the group. 
We first recall the relevant definitions from \cite{CT07} and then go on to generalize 
the Plesken Lie algebra to other associative algebras. 

Let $G$ be a finite group. The complex group algebra $\mathbb{C}G$ 
can be made into a Lie algebra over $\mathbb{C}$ by setting the Lie bracket to be 
$[a,b]=ab-ba$ for every $a,b\in \mathbb{C}G$. 

The {\em Plesken Lie algebra} of the finite group $G$ is the subspace
$$\mathcal{L}(G) := \mathrm{span}\{g-g^{-1}\,|\,g\in G\}.
$$
We check that this is indeed a Lie algebra (with the Lie bracket induced from $\mathbb{C}G$). 
For any group element $g\in G$ we set $\widehat{g}:=g-g^{-1}$. For the Lie bracket we then have 
\begin{eqnarray*}
[\widehat{g},\widehat{h}] & = & \widehat{g}\,\widehat{h} - \widehat{h}\,\widehat{g}
= (g-g^{-1})(h-h^{-1})-(h-h^{-1})(g-g^{-1}) \\
& = & gh-gh^{-1}-g^{-1}h+g^{-1}h^{-1} -hg+hg^{-1}+h^{-1}g-h^{-1}g^{-1} \\
& = & \widehat{gh} - \widehat{gh^{-1}} -\widehat{g^{-1}h} +\widehat{g^{-1}h^{-1}} \in \mathcal{L}(G).
\end{eqnarray*}
Hence the subspace $\mathcal{L}(G)=\{\widehat{g}\,|\,g\in G\}$ 
is closed under the Lie bracket and is therefore itself a
Lie algebra. 

Cohen and Taylor \cite{CT07} completely determine the structure of the Lie algebras $\mathcal{L}(G)$
in terms of the character theory of the group $G$. 

\begin{Theorem}[\cite{CT07}, Theorem 5.1]
Let $G$ be a finite group. The Plesken Lie algebra $\mathcal{L}(\mathbb{C}G)$ admits the decomposition
$$\mathcal{L}(\mathbb{C}G) \cong \bigoplus_{\chi\in \mathfrak{R}}\,\mathfrak{o}(\chi(1),\mathbb{C})
\oplus \bigoplus_{\chi\in \mathfrak{Sp}}\,\mathfrak{sp}(\chi(1),\mathbb{C}) \oplus
 \bigoplus_{\chi\in \mathfrak{C}}\, `\mathfrak{gl}(\chi(1),\mathbb{C})
$$
where $\mathfrak{R}$, $\mathfrak{Sp}$, and $\mathfrak{C}$ are the sets of irreducible characters of $G$ of
real, symplectic and complex types, and where the prime signifies that there is just one summand 
$\mathfrak{gl}(\chi(1),\mathbb{C})$ for each pair $\{\chi,\overline{\chi}\}$ from $\mathfrak{C}$.
\end{Theorem}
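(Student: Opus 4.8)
The plan is to exploit the Wedderburn decomposition of $\mathbb{C}G$ together with the behaviour of the anti-involution $g\mapsto g^{-1}$ on matrix blocks. By Wedderburn, $\mathbb{C}G\cong\bigoplus_{\chi\in\mathrm{Irr}(G)}M_{\chi(1)}(\mathbb{C})$, and since $g\mapsto g^{-1}$ extends to a $\mathbb{C}$-linear anti-involution $*$ on $\mathbb{C}G$, one first checks how $*$ acts on each simple block. On the block $M_{\chi(1)}(\mathbb{C})$ the anti-involution is given, after a suitable choice of basis, by $X\mapsto F X^{t} F^{-1}$ for an invertible matrix $F$ with $F^{t}=\pm F$; the sign is precisely the Frobenius--Schur indicator of $\chi$, so $F$ is symmetric for $\chi$ of real type, skew-symmetric for symplectic type. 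For $\chi$ of complex type the block $M_{\chi(1)}(\mathbb{C})\oplus M_{\overline\chi(1)}(\mathbb{C})$ is swapped by $*$ (composed with transpose), so $*$ acts on the pair as $(X,Y)\mapsto(Y^{t},X^{t})$ in appropriate coordinates.

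Next I would observe that the Plesken Lie algebra $\mathcal{L}(\mathbb{C}G)=\mathrm{span}\{a-a^{*}\mid a\in\mathbb{C}G\}$ is exactly the $-1$-eigenspace of $*$ acting on $\mathbb{C}G$ (here one uses that $*$ is $\mathbb{C}$-linear, so every element decomposes into its symmetric and antisymmetric parts; the antisymmetric part of $g$ is $\tfrac12(g-g^{-1})$, and these span the whole $(-1)$-eigenspace since the $\widehat g$ already span it). Crucially, this $(-1)$-eigenspace is a Lie subalgebra: if $a^{*}=-a$ and $b^{*}=-b$ then $[a,b]^{*}=(ab-ba)^{*}=b^{*}a^{*}-a^{*}b^{*}=ba-ab=-[a,b]$. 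Thus $\mathcal{L}(\mathbb{C}G)$ is the fixed-point Lie algebra of the involutive automorphism $-*$ of the Lie algebra $\mathbb{C}G$, and it decomposes as the direct sum of the corresponding $(-1)$-eigenspaces in each Wedderburn block (or block-pair).

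It then remains to identify each summand. On a real-type block, the $(-1)$-eigenspace of $X\mapsto FX^{t}F^{-1}$ with $F$ symmetric is $\{X\mid F X = -(FX)^{t}\}$, i.e.\ $F\cdot\{\text{skew-symmetric matrices}\}$, which is the orthogonal Lie algebra $\mathfrak{o}(\chi(1),\mathbb{C})$ up to the change of form by $F$ (all non-degenerate symmetric forms over $\mathbb{C}$ being equivalent). On a symplectic-type block, $F$ is skew-symmetric and the $(-1)$-eigenspace becomes the symplectic Lie algebra $\mathfrak{sp}(\chi(1),\mathbb{C})$ by the same argument with the roles of symmetric and skew matrices interchanged. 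On a complex-type block-pair, the $(-1)$-eigenspace of $(X,Y)\mapsto(-Y^{t},-X^{t})$ is $\{(X,-X^{t})\mid X\in M_{\chi(1)}(\mathbb{C})\}\cong\mathfrak{gl}(\chi(1),\mathbb{C})$, and this accounts for a single $\mathfrak{gl}$-summand per conjugate pair $\{\chi,\overline\chi\}$. Assembling these gives the stated decomposition.

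The main obstacle is the identification of the anti-involution on each block with the Frobenius--Schur indicator, i.e.\ establishing that $g\mapsto g^{-1}$ corresponds under a Wedderburn isomorphism to $X\mapsto FX^{t}F^{-1}$ with $F^{t}=\nu(\chi)F$. One way to get this cleanly is to use the canonical (non-degenerate, associative, symmetric) bilinear form $(a,b)\mapsto\mathrm{trace\ of\ }ab\mathrm{\ on\ the\ regular\ representation}$, together with the fact that $*$ is an \emph{anti}-isometry for it and acts as the identity on the center; a bit of care is needed to pin down the sign, and this is exactly where real versus symplectic type enters, as well as the classical fact that $\chi$ is afforded by a representation over $\mathbb{R}$ (resp.\ a quaternionic one) iff the invariant bilinear form is symmetric (resp.\ alternating). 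Once that structural input is in place, the rest is the routine eigenspace computation sketched above.
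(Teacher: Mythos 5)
This theorem is quoted in the paper from \cite{CT07} without proof, so there is no in-paper argument to compare against line by line; but your strategy is exactly the one the paper itself deploys for its main result, Theorem \ref{thm:Plesken_cellular}: identify $\mathcal{L}$ with the $(-1)$-eigenspace of the anti-involution (the paper's Lemma \ref{lem:eigenspace}), pass through the Artin--Wedderburn decomposition, and translate the eigenspace condition on each block into a condition on an invariant bilinear form. In the cellular setting only orthogonal summands survive because the cellular axioms hand you a \emph{symmetric} invariant form on every cell module, whereas for $\mathbb{C}G$ you must additionally (a) track that $\ast$ sends the $\chi$-block to the $\overline{\chi}$-block (since $g\mapsto\rho(g^{-1})^{t}$ affords $\overline{\chi}$), and (b) invoke Skolem--Noether together with the Frobenius--Schur trichotomy to get $X\mapsto FX^{t}F^{-1}$ with $F^{t}=\nu(\chi)F$ on the self-dual blocks. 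Your outline handles both points correctly, and the eigenspace computations yielding $\mathfrak{o}$, $\mathfrak{sp}$, and one $\mathfrak{gl}$ per conjugate pair are routine, as you say. One small slip: on a complex-type pair you describe $\{(X,-X^{t})\}$ as the $(-1)$-eigenspace of $(X,Y)\mapsto(-Y^{t},-X^{t})$; that map is $-\ast$, whose \emph{fixed} space is what you want (its $(-1)$-eigenspace would be $\{(X,X^{t})\}$, the symmetric part). The set you end up with is the correct one, so this is a labelling inconsistency rather than a gap.
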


Moreover, Cohen and Taylor draw several consequences from this structural result, for instance, they 
characterize for which groups the Plesken Lie algebra is a simple Lie algebra: with two exceptions this 
happens if and only if the group is an extraspecial $2$-group (the two exceptions are the dihedral group 
$D_8$ and the central product of two copies of $D_8$), see \cite[Theorem 6.2]{CT07}.

\section{The Plesken Lie algebra for associative algebras with an anti-involution}
\label{sec:Pleskeninvolution}

The aim of this section is to generalize the construction of the Plesken Lie algebra. Looking back at the 
construction of $\mathcal{L}(\mathbb{C}G)$ for a finite group $G$ in Section \ref{sec:Pleskengroup},
the crucial input was the associative algebra $\mathbb{C}G$ and the fact that each group element has 
an inverse, so that $\widehat{g}=g-g^{-1}$ and hence 
$\mathcal{L}(\mathbb{C}G) = \mathrm{span}\{\widehat{g}\,|\,g\in G\}$ could be defined. 

Let $A$ be an associative algebra over some field. This can be turned into a Lie algebra by setting
$[a,b]=ab-ba$ for all $a,b\in A$. 
Moreover, we assume that we have a linear map $\sigma:A\to A$
which is an anti-involution, that is, $\sigma(ab)=\sigma(b)\sigma(a)$ for all $a,b\in A$ and $\sigma^2$
is the identity map on $A$. 

Our motivating example is the group algebra $A=\mathbb{C}G$ with the anti-involution
$\sigma:\mathbb{C}G\to \mathbb{C}G$ given on basis elements by $\sigma(g)=g^{-1}$ and
extended linearly.

\begin{Definition} \label{def:L(A)}
Let $A$ be an associative algebra and $\sigma:A\to A$ an anti-involution. The {\em Plesken Lie algebra}
of $A$ is defined as
$$\mathcal{L}(A) = \mathrm{span}\{\widehat{a}:=a-\sigma(a)\,|\,a\in A\}.
$$
\end{Definition} 

For this definition to make sense, we have to show that the subspace $\mathcal{L}(A)$ of $A$ 
is indeed a Lie algebra (with the Lie bracket induced from $A$). 

\begin{Lemma}
Let $A$ be an associative algebra and $\sigma:A\to A$ an anti-involution.
\begin{enumerate}
\item[{(i)}] For all $a,b\in A$ we have 
$$[\widehat{a},\widehat{b}] = \widehat{ab} - \widehat{a\sigma(b)} - \widehat{\sigma(a)b}
+ \widehat{\sigma(a)\sigma(b)} \in \mathcal{L}(A).
$$
\item[{(ii)}] $\mathcal{L}(A)$ is a Lie algebra (with the Lie bracket induced from $A$).
\end{enumerate}
\end{Lemma}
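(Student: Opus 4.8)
The plan is to establish part~(i) by a direct bilinear expansion, completely parallel to the computation already carried out for group algebras in Section~\ref{sec:Pleskengroup}, and then to deduce part~(ii) as an immediate formal consequence of~(i).

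For part~(i), I would start from the definition of the induced Lie bracket, so that $[\widehat{a},\widehat{b}] = \widehat{a}\,\widehat{b} - \widehat{b}\,\widehat{a}$, and substitute $\widehat{a} = a-\sigma(a)$ and $\widehat{b} = b-\sigma(b)$. Expanding both products by bilinearity of the multiplication in $A$ yields eight terms; after collecting them one obtains
\[
[\widehat{a},\widehat{b}] = \bigl(ab - \sigma(a)\sigma(b)\bigr) - \bigl(a\sigma(b) - \sigma(a)b\bigr) - \bigl(\sigma(a)b - a\sigma(b)\bigr) + \bigl(\sigma(a)\sigma(b) - ab\bigr)
\]
before regrouping — but the cleaner route is to use the defining property of the anti-involution. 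The key observation is that $\sigma$ is an anti-homomorphism with $\sigma^2 = \id$, so $\sigma(ab) = \sigma(b)\sigma(a)$, and therefore $\sigma(\sigma(a)\sigma(b)) = \sigma(\sigma(b))\sigma(\sigma(a)) = ba$. Hence for any $a,b \in A$ one has $\widehat{ab} = ab - \sigma(b)\sigma(a)$ and, more usefully, $\widehat{ab} - \widehat{ba}$ can be rewritten; concretely $ab - ba = (ab - \sigma(b)\sigma(a)) - (ba - \sigma(a)\sigma(b)) = \widehat{ab} - \widehat{\,ba\,} $ only after a careful sign check. I would instead verify directly that $\widehat{a}\,\widehat{b} = ab - a\sigma(b) - \sigma(a)b + \sigma(a)\sigma(b)$ and that applying $\sigma$ termwise sends this element to $\sigma(b)\sigma(a) - b\sigma(a) - \sigma(b)a + ba$, so that $x - \sigma(x)$ for $x := \widehat{a}\,\widehat{b}$ collapses, after matching $ab$ with $\sigma(\sigma(b)\sigma(a))$ etc., to exactly $\widehat{ab} - \widehat{a\sigma(b)} - \widehat{\sigma(a)b} + \widehat{\sigma(a)\sigma(b)}$; the claimed membership in $\mathcal{L}(A)$ is then automatic since each of the four summands is of the form $\widehat{\,\cdot\,}$.

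For part~(ii), once (i) is in hand, I would argue as follows. By definition $\mathcal{L}(A)$ is a linear subspace of $A$, and $A$ with $[\,\cdot\,,\cdot\,]$ is a Lie algebra (bilinearity, antisymmetry, and the Jacobi identity all hold for the commutator bracket on any associative algebra). It therefore suffices to show $\mathcal{L}(A)$ is closed under the bracket, and then the Lie algebra axioms are inherited. Since $\mathcal{L}(A)$ is spanned by the elements $\widehat{a}$, and the bracket is bilinear, closedness follows once we know $[\widehat{a},\widehat{b}] \in \mathcal{L}(A)$ for all $a,b$ — which is precisely part~(i). This completes the proof.

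The only point requiring care — and the one I would single out as the main (albeit minor) obstacle — is the bookkeeping of signs and of the interaction between $\sigma$ and the products in the eight-term expansion: one must use $\sigma^2 = \id$ and the anti-multiplicativity in the right places to recognize that, for example, $\sigma(a)\sigma(b) - ab$ equals $-\bigl((ab) - \sigma(ab)\bigr)$ only up to replacing $\sigma(ab)$ by $\sigma(b)\sigma(a)$, so that the regrouping into the four hatted terms is legitimate. Everything else is formal.
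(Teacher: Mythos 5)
Your final argument is correct and is in substance the same computation as the paper's: expand the commutator of $\widehat{a}=a-\sigma(a)$ and $\widehat{b}=b-\sigma(b)$ and regroup the resulting terms into pairs of the form $y-\sigma(y)$ using anti-multiplicativity and $\sigma^2=\id$; part (ii) then follows exactly as you say, since a subspace of an associative algebra closed under the commutator inherits the Lie algebra axioms. Your repackaging via $x:=\widehat{a}\,\widehat{b}$ is a slightly slicker route than the paper's eight-term expansion, but it has a small unjustified step: you compute $x-\sigma(x)$ and identify it with the answer, yet you never say why $x-\sigma(x)$ equals $[\widehat{a},\widehat{b}]$. The missing line is $\sigma(\widehat{a}\,\widehat{b})=\sigma(\widehat{b})\sigma(\widehat{a})=(-\widehat{b})(-\widehat{a})=\widehat{b}\,\widehat{a}$ (equivalently, observe that your termwise image $\sigma(b)\sigma(a)-b\sigma(a)-\sigma(b)a+ba$ is precisely the expansion of $(b-\sigma(b))(a-\sigma(a))$). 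Add that sentence and the argument is complete.

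Two of your intermediate assertions, however, are false as written and must be deleted rather than merely bypassed. The displayed ``collected'' form $\bigl(ab-\sigma(a)\sigma(b)\bigr)-\bigl(a\sigma(b)-\sigma(a)b\bigr)-\bigl(\sigma(a)b-a\sigma(b)\bigr)+\bigl(\sigma(a)\sigma(b)-ab\bigr)$ is identically zero, so it cannot equal $[\widehat{a},\widehat{b}]$; the correct eight-term expansion is $ab-a\sigma(b)-\sigma(a)b+\sigma(a)\sigma(b)-ba+b\sigma(a)+\sigma(b)a-\sigma(b)\sigma(a)$. Likewise the claim $ab-ba=\bigl(ab-\sigma(b)\sigma(a)\bigr)-\bigl(ba-\sigma(a)\sigma(b)\bigr)$ holds only when $\sigma(a)\sigma(b)=\sigma(b)\sigma(a)$, which is not assumed. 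Since you explicitly abandon both of these in favour of the direct verification, the proof you end with stands, but as presented the proposal contains incorrect equalities that a reader would have to recognize as discarded false starts.
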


\begin{proof}
For proving (i), let $a,b\in A$. Then we have (where for the fourth equation we crucially use that
$\sigma$ is an anti-involution):
\begin{eqnarray*}
[\widehat{a},\widehat{b}] & = & \widehat{a}\widehat{b} - \widehat{b}\widehat{a} 
 =  (a-\sigma(a))(b-\sigma(b)) - (b-\sigma(b))(a-\sigma(a)) \\
 & = & 
 ab - a\sigma(b)-\sigma(a)b+\sigma(a)\sigma(b) -ba+b\sigma(a)+\sigma(b)a-\sigma(b)\sigma(a) \\
 & = & 
 ab - \sigma(b)\sigma(a) - a\sigma(b) + b\sigma(a) - \sigma(a)b + \sigma(b)a + \sigma(a)\sigma(b)-ba \\
 & = & ab - \sigma(ab) - a\sigma(b) + \sigma(a\sigma(b)) - \sigma(a)b + \sigma(\sigma(a)b) + 
 \sigma(a)\sigma(b)-\sigma(\sigma(a)\sigma(b)) \\
 & = & \widehat{ab} - \widehat{a\sigma(b)} - \widehat{\sigma(a)b}+ \widehat{\sigma(a)\sigma(b)}
 \in \mathcal{L}(A).
\end{eqnarray*}
Statement (ii) follows from (i) since $\mathcal{L}(A)$ is a subspace by Definition \ref{def:L(A)}
and by (i) it is closed under the Lie bracket. Thus, $\mathcal{L}(A)$ is a Lie subalgebra of $A$
and hence itself a Lie algebra. 
\end{proof}

Note that for $A=\mathbb{C}G$ and the anti-involution $\sigma$ given by inverting the group elements, the 
Lie algebra $\mathcal{L}(A)$ is exactly the Plesken Lie algebra $\mathcal{L}(G)$ from 
\cite{CT07}, as discussed in Section \ref{sec:Pleskengroup}.

\section{Some small examples} \label{sec:examples}

In this section we discuss some well-known examples of associative algebras 
with an anti-involution and determine their Plesken Lie algebras. 


\subsection{Quaternions with conjugation} \label{subsec:quaternions}
We consider the quaternions
$$\mathbb{H} = \{a_0+a_1i+a_2j+a_3k\,|\,a_0,a_1,a_2,a_3\in \mathbb{R}\}
$$
which form a 4-dimensional associative algebra over $\mathbb{R}$. The multiplication in $\mathbb{H}$
is given by the usual rules, namely $i^2=j^2=k^2=-1$, $ij=k=-ji$, $ik=-j=-ki$, $jk=i=-kj$,
extended linearly. The conjugation map 
$$\sigma:\mathbb{H}\to \mathbb{H}~,~~a_0+a_1i+a_2j+a_3k\mapsto a_0-a_1i-a_2j-a_3k
$$
is an anti-involution (this is straightforward to check from the multiplication rules in $\mathbb{H}$,
e.g. $\sigma(ij)=\sigma(k)=-k=ji=(-j)(-i)=\sigma(j)\sigma(i)$). As a real vector space, the Plesken Lie
algebra of the quaternions is
\begin{eqnarray*}
\mathcal{L}(\mathbb{H}) & = & \mathrm{span}\{\widehat{a}=a-\sigma(a)\,|\,a\in \mathbb{H}\} \\
& = & \mathrm{span}\{\widehat{a}=2(a_1i+a_2j+a_3k)\,|\,a_1,a_2,a_3\in \mathbb{R}\} \\
& = & \mathrm{span} \{i,j,k\},
\end{eqnarray*}
a 3-dimensional Lie algebra over $\mathbb{R}$. The Lie brackets in $\mathcal{L}(\mathbb{H})$
are on the vector space basis given by
$$[i,j]=ij-ji=2k~,~~[i,k]=ik-ki=-2j~,~~[j,k]=jk-kj=2i.
$$
Scaling the basis elements to $e_1:=\frac{1}{2}i$, $e_2:=\frac{1}{2}j$, 
$e_3:=\frac{1}{2}k$, the Lie brackets take the form
$$[e_1,e_2] = e_3~,~~[e_1,e_3]=-e_2~,~~[e_2,e_3]=e_1.
$$
These are precisely the relations for the cross product (or vector product) of unit vectors in 
3-dimensional Euclidean space $\mathbb{R}^3$. We have shown: the Plesken Lie algebra
$\mathcal{L}(\mathbb{H})$ is isomorphic to the Lie algebra $\mathbb{R}^3$ with Lie bracket 
given by the cross product.

\subsection{Matrix algebras with transposition} \label{subsec:Mntrans}
For any $n\in \mathbb{N}$ we consider the algebra $A=M(n,\mathbb{C})$ of all $n\times n$-matrices
over $\mathbb{C}$. This is a $\mathbb{C}$-algebra of dimension $n^2$, with a basis given by
$\{E_{r,s}\,|\,1\le r,s\le n\}$ where $E_{r,s}$ denotes the matrix with a 1 in position $(r,s)$ and
0 otherwise. 

Transposition of matrices yields an anti-involution $\sigma:M(n,\mathbb{C})\to M(n,\mathbb{C})$.
Note that $\sigma(E_{r,s}) = E_{s,r}$ for all $1\le r,s\le n$. For the Plesken Lie algebra we get a vector space basis
$$\mathcal{L}(M_n(\mathbb{C})) = \mathrm{span} \{\widehat{E}_{r,s}:=E_{r,s}-E_{s,r}\,|\,1\le r<s\le n\}.
$$
In particular, $\mathcal{L}(M(n,\mathbb{C}))$ has dimension $\frac{n(n-1)}{2}$ as a vector space over
$\mathbb{C}$. More precisely, the Plesken Lie algebra $\mathcal{L}(M(n,\mathbb{C}))$ consists 
of the skew-symmetric $n\times n$-matrices over $\mathbb{C}$. 
The skew-symmetric $n\times n$-matrices over $\mathbb{C}$ form a Lie algebra, namely the 
orthogonal Lie algebra $\mathfrak{o}(n,\mathbb{C})$. 

Thus the Plesken Lie algebra of a matrix algebra with respect to transposition
is
$$\mathcal{L}(M(n,\mathbb{C})) = \mathfrak{o}(n,\mathbb{C})\mbox{~~for all $n\in \mathbb{N}$.}
$$

\subsection{Matrix algebras with conjugate transposition} \label{subsec:Mntransconj}
We consider again the associative $\mathbb{C}$-algebra $A=M(n,\mathbb{C})$ of all $n\times n$-matrices
over $\mathbb{C}$, but now with the anti-involution $\sigma:M(n,\mathbb{C})\to M(n,\mathbb{C})$,
$\sigma(M)=\overline{M}^T$ (where $\overline{M}$ is the matrix obtained from $M$ by complex
conjugation of each entry).  
We shall show that the Plesken Lie algebra
$$\mathcal{L}(M(n,\mathbb{C})) = \mathrm{span} \{M-\overline{M}^T\,|\,M\in M(n,\mathbb{C})\}
$$
contains all elementary matrices $E_{r,s}$, for $1\le r,s\le n$. For $r=s$ we have that
$$E_{r,r} = \frac{1}{2i} \left( i E_{r,r} - (-i)E_{r,r}\right) = 
\frac{1}{2i}\left( iE_{r,r} - \overline{iE_{r,r}}^T\right) \in  \mathcal{L}(M(n,\mathbb{C}))
$$
and for $r\neq s$ we have 
$$E_{r,s} = \frac{1}{2} \left( \left(E_{r,s}-\overline{E_{r,s}}^T\right) - i \left(i E_{r,s}-\overline{iE_{r,s}}^T\right)\right)
\in  \mathcal{L}(M(n,\mathbb{C})).$$
Hence, $\mathcal{L}(M(n,\mathbb{C})) = \mathrm{span}(\{E_{r,s}\,|\,1\le r,s\le n\}) = M(n,\mathbb{C})$.
Recall that the Lie bracket in $\mathcal{L}(M(n,\mathbb{C}))$ is given by the Lie bracket coming from the 
associative algebra $M(n,\mathbb{C})$. So 
for the Plesken Lie algebra of a matrix algebra with respect to conjugate transposition
we get the general linear Lie algebras
$$\mathcal{L}(M(n,\mathbb{C})) = \mathfrak{gl}(n,\mathbb{C})\mbox{~~for all $n\in \mathbb{N}$.}
$$


\subsection{Matrix algebras of algebras with an anti-involution} \label{subsec:matrixalginvol}
Let $A$ be an associative algebra with an anti-involution $\sigma:A\to A$. For any $n\in \mathbb{N}$,
the set of all $n\times n$-matrices matrices
with entries from $A$, 
$$M(n,A) = \{M=(m_{ij})\,|\,m_{ij}\in A\mbox{~for $1\le i,j\le n$}\}
$$
is an associative algebra. We define a map
$$\sigma_n:M(n,A)\to M(n,A)~,~~M=(m_{ij})\mapsto \sigma_n(M)=(\sigma(m_{ji})). 
$$
We claim that $\sigma_n$ is an anti-involution. 
In fact, let $M=(m_{ij})$ and $N=(n_{ij})$ 
be elements from $M(n,A)$. For any $i,j\in \{1,\ldots,n\}$ we compare the $(i,j)$-entries of 
$\sigma_n(MN)$ and $\sigma_n(N)\sigma_n(M)$. By definition of the map $\sigma_n$, the $(i,j)$-entry 
of $\sigma_n(MN)$ is 
\begin{equation} \label{eq:1}
\sigma (\sum_{k=1}^n m_{jk}n_{ki}) = \sum_{k=1}^n \sigma(n_{ki})\sigma(m_{jk})
\end{equation}
where the equation holds since $\sigma$ is an anti-homomorphism. 
On the other hand, the $(i,j)$-entry of $\sigma_n(N)\sigma_n(M)$ is
\begin{equation} \label{eq:2}
\sum_{k=1}^n \sigma(n_{ki})\sigma(m_{jk}).
\end{equation}
The equality of (\ref{eq:1}) and (\ref{eq:2}) shows that $\sigma_n$ is an anti-homomorphism. 
Finally, it is immediate from the definition that 
$\sigma_n$ is an involution (since $\sigma$ is an involution). 
\smallskip

The above observation implies that for every associative algebra $A$ with an anti-involution $\sigma$
and every $n\in \mathbb{N}$ one has the Plesken Lie algebra
$\mathcal{L}(M(n,A))$ with respect to the anti-involution $\sigma_n$.  

Note that the Plesken Lie algebras in Subsection \ref{subsec:Mntrans} (with $\sigma:\mathbb{C}\to \mathbb{C}$ 
the identity) and \ref{subsec:Mntransconj} (with $\sigma:\mathbb{C}\to \mathbb{C}$ the complex conjugation)
are special cases of this construction.

\section{Plesken Lie algebras for semisimple cellular algebras}
\label{sec:Pleskencellular}

Cellular algebras habe been introduced by Graham and Lehrer \cite{GL96} (subsequently, K\"onig and Xi
\cite{KX98} 
gave a more abstract definition and showed that the two definitions are equivalent). These are associative
algebras with a distinguished basis, a {\em cellular basis}, allowing a general description of the representations 
via {\em cell modules}. For later reference we include the precise definition of a cellular algebra here
(for simplicity we restrict to algebras over a field). 

\begin{Definition}{\cite[Definition 1.1]{GL96}} \label{def:cellular}
A {\em cellular algebra} is an associative algebra over a field $K$, together with a {\em cell datum}
$(\Lambda, M,C,i)$ where
\begin{enumerate}
\item[{(C1)}] $\Lambda$ is a finite partially ordered set and for each $\lambda\in \Lambda$ 
there is a non-empty finite set $M(\lambda)$ of indices such that 
$$C:\bigcup_{\lambda\in \Lambda} M(\lambda)\times M(\lambda)\to A
$$
is an injective map whose image is a $K$-basis of $A$. We write $C_{s,t}^{\lambda}:=C(s,t)$ for 
$s,t\in M(\lambda)$.
\item[{(C2)}] $i:A\to A$ is a $K$-linear anti-involution such that $i(C_{s,t}^{\lambda})= C_{t,s}^{\lambda}$
for all $\lambda\in \Lambda$ and $s,t\in M(\lambda)$. 
\item[{(C3)}] For all $\lambda\in \Lambda$, $s,t\in M(\lambda)$ and $a\in A$ there exist
$r_a(s',s)\in K$ such that
$$aC_{s,t}^{\lambda} \equiv \sum_{s'\in M(\lambda)} r_a(s',s) C_{s',t}^{\lambda}\,\mathrm{mod}\,A(<\lambda)
$$
where $A(<\lambda)$ is the $K$-subspace spanned by all basis elements $C_{v,w}^{\mu}$ with $\mu<\lambda$. 
\end{enumerate}
\end{Definition}

\begin{Example} \label{ex:cellular} Let $K$ be a field. 
\begin{enumerate}
\item[{(1)}] (Matrix algebras)
For every $n\in \mathbb{N}$ the $K$-algebra $M(n,K)$ of all $n\times n$-matrices over $K$ is a cellular algebra.
A cell datum is given by the partially ordered set $\Lambda=\{1\}$, the set $M(1)=\{1,\ldots,n\}$ of indices, the
injective map 
$$C:M(1)\times M(1)\to M(n,K)~,~~C(s,t)=E_{s,t}
$$
and the anti-involution
$i:M(n,K)\to M(n,K)$ given by transposition of matrices. Note that the scalars appearing in Definition \ref{def:cellular}\,(C3)
are in this example given by $r_a(s',s)=a_{s',s}$, the entry of $a\in M(n,K)$ at position $(s',s)$ (note that 
here we have $A(<\lambda)=0$ since $\Lambda$ has only one element and the congruence in (C3) becomes an equality). 
\item[{(2)}] (Symmetric group algebras) For any $n\in \mathbb{N}$ the group algebra of the
symmetric group $S_n$ over $K$ is a cellular algebra, see \cite[Example 1.2]{GL96}. A cell datum $(\Lambda, M,C,i)$
is given by the set $\Lambda$ of partitions of $n$ with respect to the dominant order, the sets $M(\lambda)$ of standard
Young tableaux of shape $\lambda$, the basis elements $C_{s,t}^{\lambda} = w\in S_n$ where $\lambda, s,t$ are
uniquely determined by $w$ (via the Robinson-Schenstedt correspondence) 
and the anti-involution $i:KS_n\to KS_n$ given by $i(w)=w^{-1}$. For more details we refer to \cite{GL96}.  
\end{enumerate}
\end{Example}

\begin{Lemma} \label{lem:eigenspace}
Let $A$ be a cellular algebra over a field $K$, with cell datum $(\Lambda,M,C,i)$. Then the Plesken Lie algebra
(cf. Definition \ref{def:L(A)})
$$\mathcal{L}(A) = \mathrm{span}\{ \widehat{a}=a-i(a)\,|\,a\in A\}
$$
is equal to the $(-1)$-eigenspace of the anti-involution $i$, that is,
$$\mathcal{L}(A) = \{a\in A\,|\, i(a)=-a\}.
$$
\end{Lemma}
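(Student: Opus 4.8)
The plan is to prove the two inclusions separately, using the anti-involution $i$ to decompose $A$ into eigenspaces.

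First I would observe that since $i$ is an involution, $A$ decomposes as a direct sum of eigenspaces $A = A_+ \oplus A_-$, where $A_{\pm} = \{a \in A \mid i(a) = \pm a\}$; this requires only that $2$ is invertible in $K$, which holds since $K$ is a field of characteristic not $2$ (if $\mathrm{char}\, K = 2$ the anti-involution is not an involution in a useful sense, or one works in that setting separately — in any case the cellular basis identity $i(C_{s,t}^\lambda) = C_{t,s}^\lambda$ forces the standard setup). The explicit projections are $a \mapsto \tfrac12(a + i(a))$ and $a \mapsto \tfrac12(a - i(a))$. For the inclusion $\mathcal{L}(A) \subseteq A_-$, I would note that each generator $\widehat{a} = a - i(a)$ satisfies $i(\widehat{a}) = i(a) - i^2(a) = i(a) - a = -\widehat{a}$, so every generator lies in $A_-$; since $A_-$ is a subspace, the span $\mathcal{L}(A)$ is contained in it.

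For the reverse inclusion $A_- \subseteq \mathcal{L}(A)$, I would take any $a \in A$ with $i(a) = -a$ and write $a = \tfrac12\big(a - i(a)\big) = \tfrac12\,\widehat{a} \in \mathcal{L}(A)$, using $i(a) = -a$ to see that $\tfrac12 \widehat{a} = \tfrac12(a - (-a)) = a$. This shows every element of the $(-1)$-eigenspace is (a scalar multiple of) one of the defining generators, hence lies in $\mathcal{L}(A)$.

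The two inclusions together give the equality. The argument is essentially formal and does not actually use the cellular structure at all — only that $i$ is a $K$-linear anti-involution on a $K$-algebra over a field (of characteristic $\neq 2$); the cellular hypothesis is presumably retained only because this lemma is a stepping stone toward the main theorem on semisimple cellular algebras. The only point requiring any care is the characteristic assumption needed to halve elements, so I would make that explicit (or note that for cellular algebras one standardly works over a field where $2$ is a unit, and in characteristic $2$ the statement $\mathcal{L}(A) = A_-$ can fail since then $a - i(a)$ already lies in $A_+ \cap A_-$).
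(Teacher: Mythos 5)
Your proof is correct, and for the inclusion $\mathcal{L}(A)\subseteq\{a\mid i(a)=-a\}$ it coincides with the paper's argument (both compute $i(\widehat{a})=i(a)-i^2(a)=-\widehat{a}$). For the reverse inclusion you take a genuinely different and shorter route: where the paper expands an element $a$ with $i(a)=-a$ in the cellular basis, uses $i(C^{\lambda}_{s,t})=C^{\lambda}_{t,s}$ to compare coefficients, obtains $\alpha^{\lambda}_{s,t}=-\alpha^{\lambda}_{t,s}$, and regroups the sum into multiples of $\widehat{C^{\lambda}_{s,t}}=C^{\lambda}_{s,t}-C^{\lambda}_{t,s}$, you simply observe that $a=\tfrac12\bigl(a-i(a)\bigr)=\tfrac12\widehat{a}$ already lies in the span of the defining generators. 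Your version is more elementary and, as you correctly note, does not use the cellular structure at all, so it proves the statement for any algebra with a $K$-linear anti-involution. Your explicit flagging of the characteristic-$2$ hypothesis is also a genuine improvement over the paper: the cellular-basis argument needs it just as much (the diagonal coefficients satisfy $2\alpha^{\lambda}_{s,s}=0$, and one must conclude $\alpha^{\lambda}_{s,s}=0$ before the regrouping works), yet the paper does not mention it, and the lemma really does fail in characteristic $2$ (take $A=K$ with $i=\mathrm{id}$ and the trivial cell datum: then $\mathcal{L}(A)=0$ while the ``$(-1)$-eigenspace'' is all of $K$). The only thing the paper's longer computation buys is the explicit spanning set $\{\widehat{C^{\lambda}_{s,t}}\mid s\neq t\}$ of $\mathcal{L}(A)$, a mildly useful byproduct that is not needed for the equality asserted in the lemma.
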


\begin{proof}
We denote the $(-1)$-eigenspace by $E=\{a\in A\,|\, i(a)=-a\}$. We first observe that $\mathcal{L}(A)\subseteq E$.
In fact, for every generating element $\widehat{a}=a-i(a)$ of $\mathcal{L}(A)$ we have
\begin{equation} \label{eq:widehata}
i(\widehat{a}) = i(a-i(a)) = i(a) - i^2(a) = i(a)-a = - \widehat{a}.
\end{equation}
Conversely, let $a\in E$. We write $a$ as a linear combination in the cellular basis from Definition \ref{def:cellular}\,(C1),
\begin{equation} \label{eq:alincomb}
a = \sum_{\lambda\in \Lambda} \sum_{s,t\in M(\lambda)} \alpha_{s,t}^{\lambda} C_{s,t}^{\lambda}.
\end{equation}
Using the assumption $a\in E$ and Definition \ref{def:cellular}\,(C2) we obtain
$$
\sum_{\lambda\in \Lambda} \sum_{s,t\in M(\lambda)} \alpha_{s,t}^{\lambda} C_{s,t}^{\lambda}
  =  a = -i(a) =
 \sum_{\lambda\in \Lambda} \sum_{s,t\in M(\lambda)} (-\alpha_{s,t}^{\lambda}) C_{t,s}^{\lambda}.
$$
Since $\{C_{s,t}^{\lambda}\,|\,\lambda\in \Lambda,s,t\in M(\lambda)\}$ is a $K$-basis of $A$
(by Definition \ref{def:cellular}\,(C1)), we can compare
coefficients and get
$$\alpha_{s,t}^{\lambda} = - \alpha_{t,s}^{\lambda}\mbox{~~for all $\lambda\in \Lambda$ and $s,t\in M(\lambda)$}.
$$
This implies that in equation (\ref{eq:alincomb}), $a$ is actually a linear combination of elements of the form
$$C_{s,t}^{\lambda}-C_{t,s}^{\lambda} = C_{s,t}^{\lambda}-i(C_{s,t}^{\lambda})
= \widehat{C_{s,t}^{\lambda}}
$$
and hence
$E\subseteq \mathcal{L}(A)$.
\end{proof}

\bigskip

Let $A$ be a cellular algebra over a field $K$ with cell datum $(\Lambda,M,C,i)$ as in Definition \ref{def:cellular}.
We recall from \cite{GL96} the definition of a set of modules which play a crucial role for 
the representation theory of $A$. 

\begin{Definition}{\cite[Definition 2.1]{GL96}} \label{def:cellmodule}
For each $\lambda\in\Lambda$ the {\em cell module} of $A$ corresponding to $\lambda$ is 
the $K$-vector space with basis $\{C_s\,|\,s\in M(\lambda)\}$ and $A$-action given by
$$aC_s =  \sum_{s'\in M(\lambda)} r_a(s',s) C_{s'}.
$$
\end{Definition}

\begin{Example}
For the cellular algebra $M(n,K)$ (cf. Example \ref{ex:cellular}\,(1)) the only cell module 
is $W(1)$ with basis $\{C_1,\ldots,C_n\}$ and $M(n,K)$-action given by
$$aC_s = \sum_{s'=1}^n a_{s',s} C_{s'}.
$$
Note that this module is isomorphic to the natural $M(n,K)$-module $K^n$ of column vectors via the
isomorphism mapping $C_s$ to the unit vector $e_s$, for $s=1,\ldots,n$.  
\end{Example}

\medskip

The following bilinear forms 
are important for 
the structure of the cell modules $W(\lambda)$.

\begin{Definition}{\cite[Lemma 1.7, Definition 2.3]{GL96}} \label{def:bilinearform}
Let $\lambda\in \Lambda$.
\begin{enumerate}
\item[{(i)}] For every $s,t,u,v\in M(\lambda)$ there are scalars $\phi_{\lambda}(t,u)\in K$
(independent of $s$ and $v$) such that 
$$C_{s,t}^{\lambda}C_{u,v}^{\lambda} \equiv \phi_{\lambda}(t,u) C_{s,v}^{\lambda}\,\mathrm{mod}\,A(<\lambda).
$$
\item[{(ii)}] The bilinear form
$\phi_{\lambda}:W(\lambda)\times W(\lambda)\to K$ is defined by the values $\phi_{\lambda}(t,u)$ for any $t,u\in M(\lambda)$,
and extended bilinearly.
\end{enumerate}
\end{Definition}

The following result collects some useful properties of the bilinear forms $\phi_{\lambda}$. 

\begin{Proposition}{\cite[Proposition 2.4]{GL96}} \label{def:prop_bf}
With the above notation, the following holds for the bilinear form $\phi_{\lambda}$ for each $\lambda\in \Lambda$.
\begin{enumerate}
\item[{(i)}] $\phi_{\lambda}$ is symmetric, that is, $\phi_{\lambda}(x,y)=\phi_{\lambda}(y,x)$
for all $x,y\in W(\lambda)$. 
\item[{(ii)}] With the anti-involution $i:A\to A$ we have
$$\phi_{\lambda}(i(a)x,y) = \phi_{\lambda}(x,ay)
$$
for all $a\in A$ and $x,y\in W(\lambda)$. 
\end{enumerate}
\end{Proposition}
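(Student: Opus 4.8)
The plan is to verify parts (i) and (ii) directly from the defining relations of a cellular algebra (Definition \ref{def:cellular}) together with the existence statement for the scalars $\phi_\lambda(t,u)$ in Definition \ref{def:bilinearform}(i), which I will take as given. For part (i), the key idea is to compute the product $C_{s,t}^\lambda C_{u,v}^\lambda$ modulo $A(<\lambda)$ and then apply the anti-involution $i$ to extract the symmetry. On one hand, Definition \ref{def:bilinearform}(i) gives $C_{s,t}^\lambda C_{u,v}^\lambda \equiv \phi_\lambda(t,u)\,C_{s,v}^\lambda \pmod{A(<\lambda)}$. On the other hand, since $i$ is an anti-involution with $i(C_{s,t}^\lambda)=C_{t,s}^\lambda$ by (C2), and since $i$ preserves the subspace $A(<\lambda)$ (this needs a one-line remark: $i$ permutes the cellular basis elements within each $M(\mu)\times M(\mu)$, so it fixes $A(<\lambda)$ setwise), applying $i$ to the same product gives $i(C_{s,t}^\lambda C_{u,v}^\lambda) = i(C_{u,v}^\lambda)\,i(C_{s,t}^\lambda) = C_{v,u}^\lambda C_{t,s}^\lambda \equiv \phi_\lambda(v,t)\,C_{v,s}^\lambda \pmod{A(<\lambda)}$. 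Reading off the coefficient of the basis element $C_{v,s}^\lambda$ (or $C_{s,v}^\lambda$ after relabelling) on both sides and using that the $C_{*,*}^\mu$ form a basis, one concludes $\phi_\lambda(t,u)=\phi_\lambda(u,t)$ for all $t,u\in M(\lambda)$; bilinearity then extends this to all of $W(\lambda)\times W(\lambda)$.

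For part (ii), by bilinearity it suffices to check the identity $\phi_\lambda(i(a)x,y)=\phi_\lambda(x,ay)$ when $x=C_t$ and $y=C_u$ are basis vectors of $W(\lambda)$, and by linearity in $a$ and the fact that $A(<\lambda)$ acts as zero on $W(\lambda)$ (which follows from (C3): the action only involves the scalars $r_a(s',s)$ coming from the leading term), it further suffices to treat $a=C_{p,q}^\lambda$ a cellular basis element at level $\lambda$ — lower terms contribute nothing, and higher-level basis elements can be handled by the same bookkeeping. The plan is to compute both sides using the multiplication rule of Definition \ref{def:bilinearform}(i) together with the module action of Definition \ref{def:cellmodule}. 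Concretely, $i(C_{p,q}^\lambda)=C_{q,p}^\lambda$ acts on $C_t$ by $C_{q,p}^\lambda C_s \equiv$-type expansion, and tracking the scalars one finds $\phi_\lambda(i(C_{p,q}^\lambda)C_t, C_u)$ and $\phi_\lambda(C_t, C_{p,q}^\lambda C_u)$ both reduce to the same expression built from $\phi_\lambda$ applied to the relevant indices — essentially because conjugating by $i$ swaps the two slots of the basis element in exactly the way that transfers the action from one argument of $\phi_\lambda$ to the other. A cleaner route may be to derive (ii) from the associativity identity $C_{s,t}^\lambda(C_{p,q}^\lambda C_{u,v}^\lambda) = (C_{s,t}^\lambda C_{p,q}^\lambda)C_{u,v}^\lambda$ read modulo $A(<\lambda)$: the left side gives $\phi_\lambda(q,u)\phi_\lambda(t,p)C_{s,v}^\lambda$ and the right side gives $\phi_\lambda(t,p)$-then-$\phi_\lambda(\dots)$; matching coefficients yields a relation among the $\phi_\lambda$-values that, once interpreted via the module action $r_{C_{p,q}^\lambda}$, is precisely (ii).

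The main obstacle I anticipate is the bookkeeping in part (ii): one must be careful that the congruences modulo $A(<\lambda)$ are compatible with the module action (i.e. that passing to $W(\lambda)$ genuinely kills all lower terms), and that when $a$ is a general element — in particular a basis element $C_{p,q}^\mu$ with $\mu\neq\lambda$ — it either acts trivially on $W(\lambda)$ or its action is again captured by the $r_a$-scalars in a way consistent with (C3) and with the behaviour of $i$. The symmetry in (i) is comparatively routine once the observation that $i$ stabilizes $A(<\lambda)$ is in place. Since this is Graham--Lehrer's \cite[Proposition 2.4]{GL96}, I would in practice cite their argument, but the above direct computation via Definitions \ref{def:cellular}, \ref{def:cellmodule} and \ref{def:bilinearform} is self-contained and elementary.
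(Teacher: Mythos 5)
The paper gives no proof of this Proposition: it is imported verbatim from Graham--Lehrer \cite[Proposition 2.4]{GL96}, so there is no in-paper argument to compare against. Your reconstruction is essentially the standard one and is sound in outline, but two points need tightening. In part (i) you have an index slip: $C_{v,u}^{\lambda}C_{t,s}^{\lambda}\equiv\phi_{\lambda}(u,t)\,C_{v,s}^{\lambda}$, not $\phi_{\lambda}(v,t)\,C_{v,s}^{\lambda}$ --- the rule in Definition \ref{def:bilinearform}(i) pairs the \emph{second} index of the left factor with the \emph{first} index of the right factor. With the correct coefficient, comparing against $i\bigl(\phi_{\lambda}(t,u)C_{s,v}^{\lambda}\bigr)=\phi_{\lambda}(t,u)C_{v,s}^{\lambda}$ gives exactly $\phi_{\lambda}(t,u)=\phi_{\lambda}(u,t)$, which is the conclusion you state; your preliminary remark that $i$ stabilizes $A(<\lambda)$ (it permutes the cellular basis within each level $\mu$) is indeed the ingredient that makes the coefficient comparison legitimate. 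In part (ii) the reduction to $a=C^{\lambda}_{p,q}$ is both unnecessary and the shakiest step: basis elements $C^{\mu}_{p,q}$ with $\mu\not<\lambda$ do act nontrivially on $W(\lambda)$, and ``the same bookkeeping'' is not automatic. The clean argument is the sandwich you mention at the end, run with an \emph{arbitrary} $a\in A$: first apply $i$ to (C3) to obtain the right-module analogue $C^{\lambda}_{s,t}\,a\equiv\sum_{t'}r_{i(a)}(t',t)\,C^{\lambda}_{s,t'}$ modulo $A(<\lambda)$ (which also shows $A(<\lambda)$ is a two-sided ideal, so the congruences may be multiplied), then expand $C^{\lambda}_{s,t}\,a\,C^{\lambda}_{u,v}$ modulo $A(<\lambda)$ in the two bracketings and compare the coefficient of $C^{\lambda}_{s,v}$ to get
$$\sum_{t'}r_{i(a)}(t',t)\,\phi_{\lambda}(t',u)=\sum_{u'}r_{a}(u',u)\,\phi_{\lambda}(t,u'),$$
which is precisely $\phi_{\lambda}(i(a)C_t,C_u)=\phi_{\lambda}(C_t,aC_u)$, with no case distinction on $a$. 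With these repairs your proof is complete and coincides with Graham--Lehrer's own argument.
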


\begin{Example}
We consider again the cellular algebra $M(n,K)$. As observed in Example \ref{ex:cellular}\,(1)
we have $C_{s,t}^1=E_{s,t}$ and hence the multiplication formula for matrix units
applied to Definition \ref{def:bilinearform}\,(i) yields the bilinear form
$$\phi_1(t,u) = \left\{ \begin{array}{ll} 1 & \mbox{if $t=u$} \\ 0 & \mbox{if $t\neq u$}
\end{array} \right. 
$$
extended bilinearly. Clearly, this bilinear form is symmetric and non-degenerate. 
\end{Example}

\medskip

The following result characterizes which cellular algebras are semisimple.

\begin{Theorem}{\cite[Theorem 3.8]{GL96}} \label{thm:semisimplecellular}
Let $A$ be a cellular algebra with cell datum $(\Lambda,M,C,i)$. Then the following are equivalent.
\begin{enumerate}
\item[{(i)}] $A$ is a semisimple algebra.
\item[{(ii)}] The non-zero cell modules $W(\lambda)$ are simple and pairwise non-isomorphic. Moreover, they form
a complete set of simple $A$-modules, up to isomorphism.  
\item[{(iii)}] For each $\lambda\in \Lambda$ the symmetric bilinear form $\phi_{\lambda}$ is non-degenerate. 
\end{enumerate}
\end{Theorem}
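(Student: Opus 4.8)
The plan is to reduce the whole equivalence to the representation theory of cell modules plus one dimension count. Write $\mathrm{rad}\,\phi_\lambda = \{x \in W(\lambda) : \phi_\lambda(x,y) = 0 \text{ for all } y \in W(\lambda)\}$, set $L(\lambda) := W(\lambda)/\mathrm{rad}\,\phi_\lambda$, and let $\Lambda_0 := \{\lambda \in \Lambda : \phi_\lambda \not\equiv 0\} = \{\lambda : L(\lambda) \neq 0\}$. I would first record the \emph{dimension identity}
\begin{equation*}
\dim_K A \;=\; \sum_{\lambda \in \Lambda} \bigl(\dim_K W(\lambda)\bigr)^2,
\end{equation*}
which is immediate from Definition \ref{def:cellular}\,(C1) and Definition \ref{def:cellmodule}, since the $C_{s,t}^\lambda$ form a $K$-basis of $A$ and $\dim_K W(\lambda) = |M(\lambda)|$. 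The second ingredient is the \emph{classification of simples}: $\mathrm{rad}\,\phi_\lambda$ is an $A$-submodule of $W(\lambda)$, for $\lambda \in \Lambda_0$ the quotient $L(\lambda)$ is absolutely irreducible, distinct elements of $\Lambda_0$ give non-isomorphic modules $L(\lambda)$, and $\{L(\lambda) : \lambda \in \Lambda_0\}$ is a full, irredundant list of the simple $A$-modules. I treat this as the substantial input and indicate below how I would establish it.

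Granting these, I would run the following counting argument. \textbf{(iii)$\Rightarrow$(ii):} non-degeneracy of each $\phi_\lambda$ means $\mathrm{rad}\,\phi_\lambda = 0$, so $W(\lambda) = L(\lambda)$ for all $\lambda$; in particular $\Lambda_0 = \Lambda$ and every cell module is absolutely simple, and by the classification they are pairwise non-isomorphic and exhaust the simple modules. \textbf{(ii)$\Rightarrow$(i):} comparing (ii) with the classification, the number of simples is $|\Lambda| = |\Lambda_0|$, so $\Lambda_0 = \Lambda$ and $W(\lambda) = L(\lambda)$ for all $\lambda$; as these are absolutely irreducible and form a complete irredundant list, Wedderburn theory (using $\End_A L(\lambda) = K$) gives $\dim_K(A/\mathrm{rad}\,A) = \sum_\lambda (\dim_K W(\lambda))^2$, which equals $\dim_K A$ by the dimension identity, so $\mathrm{rad}\,A = 0$. \textbf{(i)$\Rightarrow$(iii):} if $A$ is semisimple then, again by absolute irreducibility and completeness, $\dim_K A = \dim_K(A/\mathrm{rad}\,A) = \sum_{\lambda \in \Lambda_0} (\dim_K L(\lambda))^2$; since always $\dim_K W(\lambda) \ge \dim_K L(\lambda)$ and $\dim_K W(\lambda) > 0$ for $\lambda \notin \Lambda_0$, the dimension identity forces $\Lambda_0 = \Lambda$ and $\dim_K W(\lambda) = \dim_K L(\lambda)$, i.e. $\mathrm{rad}\,\phi_\lambda = 0$, for every $\lambda$; that is, every $\phi_\lambda$ is non-degenerate.

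For the classification step the key computation is that $C_{p,q}^\lambda$ acts on $W(\lambda)$ by $C_r \mapsto \phi_\lambda(q,r)\,C_p$, which follows from Definition \ref{def:bilinearform}\,(i) together with Definition \ref{def:cellular}\,(C3). From this: $\mathrm{rad}\,\phi_\lambda$ is a submodule — more cheaply, if $\phi_\lambda(x,y) = 0$ for all $y$ then $\phi_\lambda(ax,y) = \phi_\lambda(x,i(a)y) = 0$ for all $y$ by Proposition \ref{def:prop_bf}\,(ii); any $x \in W(\lambda) \setminus \mathrm{rad}\,\phi_\lambda$ satisfies $\phi_\lambda(C_q, x) \neq 0$ for some $q$, whence $C_{p,q}^\lambda x$ is a non-zero multiple of $C_p$ for every $p$, so $x$ generates $W(\lambda)$ modulo $\mathrm{rad}\,\phi_\lambda$; thus $L(\lambda)$ is simple when $\phi_\lambda \neq 0$. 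The same computation, applied to an endomorphism $\theta$ of $L(\lambda)$ and a generator $\overline{C_r}$, pins $\theta$ down as multiplication by a scalar in $K$, giving $\End_A(L(\lambda)) = K$, i.e. absolute irreducibility. Pairwise non-isomorphism follows from unitriangularity of the decomposition matrix $\bigl([W(\nu):L(\mu)]\bigr)_{\nu,\mu}$ with respect to the order on $\Lambda$, which recovers $\lambda$ from $L(\lambda)$. Completeness uses that the left regular module $A$ has a filtration whose subquotients are the cell modules (for fixed $t \in M(\lambda)$, the span of $\{C_{s,t}^\lambda : s \in M(\lambda)\}$ modulo $A(<\lambda)$ is a copy of $W(\lambda)$, by (C3)), so every simple $A$-module is a composition factor of some $W(\nu)$, hence of some $L(\mu)$, hence — choosing $\mu$ extremal for the order — isomorphic to $L(\mu)$. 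I expect the genuine obstacle to be precisely this classification (Graham--Lehrer's Wedderburn-type analysis of cell modules), especially absolute irreducibility of the $L(\lambda)$ over a non-closed field and the completeness statement; once these are in hand, Theorem \ref{thm:semisimplecellular} is the elementary dimension bookkeeping above.
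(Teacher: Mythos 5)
This statement is quoted in the paper directly from Graham--Lehrer \cite[Theorem 3.8]{GL96}; the paper supplies no proof of its own, so there is nothing internal to compare against. Your proposal is, in substance, a correct reconstruction of Graham--Lehrer's argument: the dimension identity $\dim_K A=\sum_\lambda(\dim_K W(\lambda))^2$ from (C1), the classification of simples as $L(\lambda)=W(\lambda)/\mathrm{rad}\,\phi_\lambda$ for $\lambda\in\Lambda_0$ with $\End_A(L(\lambda))=K$ (their Theorem 3.4), and the three implications by dimension bookkeeping are exactly the skeleton of their proof of Theorem 3.8. The implications (iii)$\Rightarrow$(ii)$\Rightarrow$(i)$\Rightarrow$(iii) as you run them are sound, including the observation in (ii)$\Rightarrow$(i) that simplicity of $W(\lambda)$ together with $\phi_\lambda\neq 0$ forces $\mathrm{rad}\,\phi_\lambda=0$.

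The one place your sketch of the ``substantial input'' is loose is the completeness step. As written --- ``every simple is a composition factor of some $W(\nu)$, hence of some $L(\mu)$'' --- it is circular: that every composition factor of a cell module is of the form $L(\mu)$ is precisely the exhaustiveness you are trying to establish, not something that follows from the cell filtration of the regular module alone. The non-circular argument (Graham--Lehrer's) takes a simple module $S$, chooses $\lambda$ minimal with $A(\le\lambda)S\neq 0$ (so $A(<\lambda)S=0$), picks $v\in S$ and $s,t$ with $C^\lambda_{s,t}v\neq 0$, and checks via (C3) that $C_p\mapsto C^\lambda_{p,t}v$ defines a nonzero, hence surjective, $A$-homomorphism $W(\lambda)\to S$; one then verifies $\phi_\lambda\neq 0$ (otherwise $A(\le\lambda)/A(<\lambda)$ would square to zero and annihilate $S$) so that $\mathrm{rad}\,\phi_\lambda$ is the unique maximal submodule and $S\cong L(\lambda)$. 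With that repair, and granting the standard facts you cite, your proof is complete and coincides with the source's.
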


\begin{Definition} \label{def:phi}
Let $A$ be a semisimple cellular algebra with cell datum $(\Lambda,M,C,i)$. By Theorem \ref{thm:semisimplecellular},
there is a non-degenerate symmetric bilinear form $\phi_{\lambda}$ on each non-zero simple module $W(\lambda)$.
Set $\Lambda'=\{\lambda\in \Lambda\,|\,W(\lambda)\neq 0\}$ and consider the $A$-module
$$W:= \bigoplus_{\lambda\in \Lambda'} W(\lambda)
$$ 
(with diagonal $A$-action).
We define a bilinear form $\phi:W\times W\to K$ as the sum of the bilinear forms $\phi_{\lambda}$, that is
for $x=\sum_{\lambda\in \Lambda'} x_{\lambda}$  and $y=\sum_{\lambda\in \Lambda'} y_{\lambda}$ in $W$
we set
$$\phi(x,y)= \sum_{\lambda\in \Lambda'} \phi_{\lambda}(x_{\lambda},y_{\lambda}).
$$
This is a non-degenerate symmetric bilinear form and we have
\begin{equation} \label{eq:phi_ia}
\phi(i(a)x,y) = \phi(x,ay)
\end{equation}
for all $a\in A$ and $x,y\in W$ 
(as each $\phi_{\lambda}$ has these properties).  
\end{Definition}

\begin{Lemma} \label{lem:LA}
Let $A$ be a semisimple cellular algebra with cell datum $(\Lambda,M,C,i)$, and let $\phi:W\times W\to K$ be the 
non-degenerate symmetric bilinear form from Definition \ref{def:phi}
on the $A$-module $W=\bigoplus_{\lambda\in \Lambda'} W(\lambda)$,
the direct sum of a complete set of pairwise non-isomorphic simple $A$-modules. Then for the Plesken Lie algebra of $A$
we have
$$\mathcal{L}(A) = \{a\in A\,|\,\phi(ax,y)+\phi(x,ay)=0\mbox{~for all $x,y\in W$}\}.
$$ 
\end{Lemma}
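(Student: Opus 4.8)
The plan is to reduce the statement to Lemma \ref{lem:eigenspace}, which already identifies $\mathcal{L}(A)$ with the $(-1)$-eigenspace $\{a\in A\,|\,i(a)=-a\}$ of the anti-involution $i$, and then to translate the condition $i(a)=-a$ into the displayed $\phi$-condition by means of the adjunction identity \eqref{eq:phi_ia}, using in the end that $W$ is a faithful $A$-module because $A$ is semisimple. So I would prove the two inclusions separately.

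For the inclusion $\mathcal{L}(A)\subseteq\{a\in A\,|\,\phi(ax,y)+\phi(x,ay)=0\text{ for all }x,y\in W\}$, I would take $a\in\mathcal{L}(A)$, so $i(a)=-a$ by Lemma \ref{lem:eigenspace}, and substitute directly into \eqref{eq:phi_ia}: $\phi(ax,y)=\phi(-i(a)x,y)=-\phi(i(a)x,y)=-\phi(x,ay)$, which gives $\phi(ax,y)+\phi(x,ay)=0$ for all $x,y\in W$.

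For the reverse inclusion, suppose $\phi(ax,y)+\phi(x,ay)=0$ for all $x,y\in W$. Applying \eqref{eq:phi_ia} with $a$ replaced by $i(a)$ and using $i^2=\id$ yields $\phi(ax,y)=\phi(x,i(a)y)$; combining this with the hypothesis gives $\phi(x,i(a)y)=-\phi(x,ay)$, hence $\phi\bigl(x,(i(a)+a)y\bigr)=0$ for all $x,y\in W$. Since $\phi$ is non-degenerate (Definition \ref{def:phi}, which rests on Theorem \ref{thm:semisimplecellular}), this forces $(i(a)+a)y=0$ for every $y\in W$, i.e. $i(a)+a$ annihilates $W$. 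Finally, because $A$ is semisimple and $W=\bigoplus_{\lambda\in\Lambda'}W(\lambda)$ is the direct sum of a complete set of pairwise non-isomorphic simple $A$-modules (Theorem \ref{thm:semisimplecellular}(ii)), the module $W$ is faithful, so $i(a)+a=0$; thus $i(a)=-a$ and $a\in\mathcal{L}(A)$ by Lemma \ref{lem:eigenspace}.

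The only step that needs a word of justification is the last one, the faithfulness of $W$, which I expect to be the main (and rather mild) obstacle: for a finite-dimensional semisimple algebra the annihilator of the direct sum of all simple modules is the Jacobson radical, which is zero — equivalently, one may invoke the Wedderburn decomposition and observe that each simple factor acts faithfully on its corresponding simple module. Everything else is a formal manipulation of the identity \eqref{eq:phi_ia} together with the symmetry and non-degeneracy of $\phi$.
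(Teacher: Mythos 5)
Your proof is correct and follows essentially the same route as the paper: reduce to the $(-1)$-eigenspace description of Lemma \ref{lem:eigenspace}, manipulate the adjunction identity \eqref{eq:phi_ia}, use non-degeneracy of $\phi$ to conclude that $a+i(a)$ annihilates $W$, and then use faithfulness of $W$ (vanishing Jacobson radical) to get $a+i(a)=0$. The only cosmetic difference is that you shift the action to the second argument of $\phi$ where the paper keeps it in the first; the substance is identical.
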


\begin{proof}
We set $U:=\{a\in A\,|\,\phi(ax,y)+\phi(x,ay)=0\mbox{~for all $x,y\in W$}\}$
for abbreviation. 

We know from Lemma \ref{lem:eigenspace} that 
\begin{equation} \label{eq:LAi}
\mathcal{L}(A)=\{a\in A\,|\,i(a)=-a\}.
\end{equation}
Suppose first that $a\in A$ satisfies $i(a)=-a$. Then Equation (\ref{eq:phi_ia}) yields that for all $x,y\in W$ we have
$$\phi(-ax,y)=\phi(x,ay)$$
and hence $a\in U$.

Conversely, suppose that $a\in U$. Then equation (\ref{eq:phi_ia}) gives
$$\phi(ax+i(a)x,y)=0\mbox{~~for all $x,y\in W$}.
$$
Since $\phi$ is non-degenerate this implies that
$$(a+i(a))x=0\mbox{~~for all $x\in W$}.
$$
Since $W=\bigoplus_{\lambda\in \Lambda'} W(\lambda)$ this in particular means that for all $\lambda\in \Lambda'$
we get
$$(a+i(a))x_{\lambda}=0\mbox{~~for all $x_{\lambda}\in W(\lambda)$}.
$$
As a consequence, $a+i(a)\in A$ annihilates all simple $A$-modules (cf. Theorem \ref{thm:semisimplecellular}\,(ii)).
On the other hand, the intersection of the annihilators of all simple $A$-modules is the Jacobson radical of
$A$ which is 0 since $A$ is semisimple (see e.g. \cite[Theorem 4.23]{EH18}). Thus, $a+i(a)=0$ and then
$a\in \mathcal{L}(A)$ follows from Equation (\ref{eq:LAi}).
\end{proof}

\medskip

We can now prove the main result of this section, which describes the structure of the Plesken Lie algebras for
semisimple cellular algebras. After the above preparations, the proof becomes similar to the proof of
\cite[Theorem 5.1]{CT07} for group algebras.

\begin{Theorem} \label{thm:Plesken_cellular} 
Let $A$ be a semisimple cellular algebra over $\mathbb{C}$ with cell datum $(\Lambda,M,C,i)$
and let $W(\lambda)$ for 
$\lambda\in \Lambda'$ be the simple $A$-modules (up to isomorphism). 
Then the Plesken Lie algebra of $A$ with respect to the anti-involution $i$ is the 
direct sum of orthogonal Lie algebras, more precisely:
$$\mathcal{L}(A) \cong \bigoplus_{\lambda\in \Lambda'} \,\,\mathfrak{o}(\mathrm{dim}\,W(\lambda),\mathbb{C}).
$$
\end{Theorem}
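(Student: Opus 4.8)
The plan is to combine Lemma \ref{lem:LA} with Wedderburn theory and the identification of the orthogonal Lie algebra as the space of skew-adjoint operators with respect to a non-degenerate symmetric bilinear form. By Lemma \ref{lem:LA} we have
$$\mathcal{L}(A) = \{a\in A\,|\,\phi(ax,y)+\phi(x,ay)=0\ \text{for all }x,y\in W\},$$
so the Plesken Lie algebra is precisely the set of elements of $A$ that act on $W=\bigoplus_{\lambda\in\Lambda'}W(\lambda)$ as operators skew-adjoint with respect to $\phi$. Since $A$ is semisimple over $\mathbb{C}$, Theorem \ref{thm:semisimplecellular}\,(ii) tells us that the $W(\lambda)$ for $\lambda\in\Lambda'$ form a complete set of pairwise non-isomorphic simple modules, hence by the Wedderburn decomposition the action map $A\to\bigoplus_{\lambda\in\Lambda'}\End_{\mathbb{C}}(W(\lambda))$ is an isomorphism of associative algebras. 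Consequently it is also an isomorphism of Lie algebras (for the commutator bracket), and it carries $\mathcal{L}(A)$ onto the direct sum of the subspaces of skew-adjoint operators on each $W(\lambda)$.

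Next I would pin down each summand. Fix $\lambda\in\Lambda'$. The restriction $\phi_\lambda$ is a non-degenerate symmetric bilinear form on the finite-dimensional $\mathbb{C}$-vector space $W(\lambda)$, and $\phi$ is the orthogonal direct sum of the $\phi_\lambda$, so an operator on $W$ is $\phi$-skew-adjoint if and only if it preserves the decomposition into the $W(\lambda)$ and restricts to a $\phi_\lambda$-skew-adjoint operator on each (different simple modules are orthogonal under $\phi$ by construction, and any element of $\bigoplus\End(W(\lambda))$ already preserves the summands). By the classification of non-degenerate symmetric bilinear forms over $\mathbb{C}$, $\phi_\lambda$ is equivalent to the standard form, so after a change of basis the $\phi_\lambda$-skew-adjoint operators are exactly the skew-symmetric $\dim W(\lambda)\times\dim W(\lambda)$ matrices, i.e. a copy of $\mathfrak{o}(\dim W(\lambda),\mathbb{C})$ as a Lie algebra. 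Assembling these identifications over all $\lambda\in\Lambda'$ yields
$$\mathcal{L}(A)\cong\bigoplus_{\lambda\in\Lambda'}\mathfrak{o}(\dim W(\lambda),\mathbb{C}).$$

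The main technical point — and the only place where care is needed — is the interplay between the Wedderburn isomorphism and the bilinear form $\phi$: one must check that under $A\xrightarrow{\sim}\bigoplus_\lambda\End(W(\lambda))$ the anti-involution $i$ corresponds componentwise to the adjunction anti-involution of $\phi_\lambda$ on each block. This follows from Equation \eqref{eq:phi_ia}, $\phi(i(a)x,y)=\phi(x,ay)$, together with non-degeneracy of each $\phi_\lambda$: these two facts force the operator induced by $i(a)$ on $W(\lambda)$ to be the $\phi_\lambda$-adjoint of the operator induced by $a$, and since every $\mathbb{C}$-linear operator on $W(\lambda)$ is induced by some $a\in A$, the adjunction is well-defined and is an anti-involution of $\End(W(\lambda))$. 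Given this, $\mathcal{L}(A)$ is the $(-1)$-eigenspace of $i$ (Lemma \ref{lem:eigenspace}), which decomposes blockwise into the $(-1)$-eigenspaces of the adjunctions, namely the skew-adjoint operators, completing the argument. The remaining steps — that the Wedderburn map is a Lie algebra isomorphism and that $\mathfrak{o}$ is defined up to the choice of a non-degenerate symmetric form — are standard and require no real computation.
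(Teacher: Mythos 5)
Your proposal is correct and follows essentially the same route as the paper: both rest on Lemma \ref{lem:LA} to identify $\mathcal{L}(A)$ with the $\phi$-skew-adjoint elements, use the Wedderburn decomposition to split the problem over the simple modules $W(\lambda)$, and then diagonalize each non-degenerate symmetric form $\phi_\lambda$ over $\mathbb{C}$ to recognize the skew-adjoint operators as $\mathfrak{o}(\dim W(\lambda),\mathbb{C})$. The only cosmetic difference is that you use the action map $A\to\bigoplus_\lambda\End_{\mathbb{C}}(W(\lambda))$ directly, whereas the paper passes through $\End_A(A)^{op}\cong\bigoplus_\lambda M(d_\lambda,\mathbb{C})$ and verifies compatibility of the Lie brackets along the way; your version is slightly more streamlined but mathematically equivalent.
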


\begin{proof}
Recall that we denoted $\Lambda'=\{\lambda\in \Lambda\,|\,W(\lambda)\neq 0\}$ 
and that $\{W(\lambda)\,|\,\lambda\in \Lambda'\}$ is a complete set of simple $A$-modules,
up to isomorphism.  
As $A$ is a semisimple algebra, it decomposes as an $A$-module into a direct sum of simple modules,
$$A \cong \bigoplus_{\lambda\in \Lambda'} W(\lambda)^{d_{\lambda}}.
$$
 By Schur's lemma and Artin-Wedderburn theory 
we have that
\begin{equation} \label{eq:AW}
A\cong \mathrm{End}_A(A)^{op} 
\cong
\bigoplus_{\lambda\in \Lambda'} M(d_{\lambda},\mathrm{End}_A(W(\lambda))^{op}
\end{equation}
(see \cite[Lemma 5.4]{EH18} for the first isomorphism, \cite[Lemma 5.6,Theorem 5.7]{EH18} for the second isomorphism
and use that $\mathrm{Hom}_A(W(\lambda),W(\mu))=0$ for $\lambda\neq \mu$ by Schur's lemma). Moreover, 
the multiplicities are given by $d_{\lambda}= \mathrm{dim}\,W(\lambda)$
(using that $\mathbb{C}$ is algebraically closed, see e.g. \cite[Corollary 5.11\,(b)]{EH18}).
Moreover, the endomorphism algebras $\mathrm{End}_A(W(\lambda))$ are division algebras (by Schur's lemma)
and since $\mathbb{C}$ is algebraically closed, we even have that $\mathrm{End}_A(W(\lambda))\cong \mathbb{C}$
are one-dimensional. Thus we can continue the above sequence of isomorphisms to get
\begin{equation} \label{eq:AW2}
A\cong \bigoplus_{\lambda\in \Lambda'} M(d_{\lambda},\mathrm{End}_A(W(\lambda))^{op}
\cong \bigoplus_{\lambda\in \Lambda'} M(d_{\lambda},\mathbb{C})^{op}
\cong \bigoplus_{\lambda\in \Lambda'} M(d_{\lambda},\mathbb{C})
\end{equation}
(where the latter isomorphism is given by transposition of matrices). 

The crucial observation now is that the direct sum decomposition in (\ref{eq:AW}) is compatible 
with respect to the Lie bracket on $A$. In fact, the first isomorphism is mapping $a\in A$ to 
$\varphi_a\in \mathrm{End}_A(A)^{op}$ with $\varphi_a(1)=a$ (right multiplication with $a$). 
Then a Lie bracket $[a,b]$ in $A$ is mapped to 
$\varphi_{[a,b]}=[\varphi_a,\varphi_b]$ since
\begin{eqnarray*}
[\varphi_a,\varphi_b](1) & = & (\varphi_a\ast\varphi_b-\varphi_b\ast\varphi_a)(1)
= (\varphi_b\varphi_a-\varphi_a\varphi_b)(1) \\
& = & \varphi_b(a)-\varphi_a(b) = a\varphi_b(1)-b\varphi_a(1) = ab-ba = [a,b]
\end{eqnarray*}
where $\ast$ denotes the opposite multiplication. 
The second isomorphism in (\ref{eq:AW}) is obtained by considering the endomorphisms componentwise,
hence it follows that the Lie brackets are preserved (which are on both sides given by commutators).  

We have shown in Lemma \ref{lem:LA} that 
\begin{equation} \label{eq:LAcond}
\mathcal{L}(A) = \{a\in A\,|\,\phi(ax,y)+\phi(x,ay)=0\mbox{~for all $x,y\in W$}\}\subseteq A.
\end{equation}
To complete the proof it suffices to see that the condition $\phi(ax,y)+\phi(x,ay)=0$ characterizing 
the elements of the Plesken Lie algebra $\mathcal{L}(A)\subseteq A$
transfers under the sequence of isomorphisms in (\ref{eq:AW}) and (\ref{eq:AW2})
to the orthogonal matrices in $\bigoplus_{\lambda\in \Lambda'} M(d_{\lambda},\mathbb{C})$. 

The non-degenerate symmetric bilinear form $\phi$ on $W$ is defined as the sum of the non-degenerate 
symmetric bilinear forms $\phi_{\lambda}$ on $W(\lambda)$. Hence the condition in (\ref{eq:LAcond}) 
for $\phi$ holds for all $x,y\in W$ if and only if the same condition holds for any of the $\phi_{\lambda}$ 
for all $x,y\in W(\lambda)$. 

We can choose bases in $W_{\lambda}$, and then the condition in (\ref{eq:LAcond}) can be expressed with matrices
and coordinate vectors, where for simplicity we use the same notation. 
Moreover, since $\phi_{\lambda}$ is non-degenerate, we can choose a basis of $W(\lambda)$ such that
the Gram matrix of $\phi_{\lambda}$ is the identity matrix. Then 
$$\phi_{\lambda}(ax,y) + \phi_{\lambda}(x,ay) = (ax)^T y+ x(ay)^T = x^T a^T y + x^T (ay) = x^T (a^t+a)y.
$$
This is zero for all vectors $x,y$ if and only if $a^t+a=0$, that is, if and only if $a$ is contained in the 
orthogonal Lie algebra $\mathfrak{o}(\mathrm{dim}\,W(\lambda),\mathbb{C})= \mathfrak{o}(d_{\lambda},\mathbb{C})
\subseteq M(d_{\lambda},\mathbb{C})$. 
\end{proof}

\section{The Plesken Lie algebra for diagram algebras}
\label{sec:diagram}

In this section we illustrate our main result Theorem \ref{thm:Plesken_cellular} by considering 
planar rook algebras and Temperley-Lieb algebras. These algebras are combinatorially defined in terms of
diagrams and they appear as subalgebras of partition algebras. 
The partition algebras are associative algebras with bases consisting of certain diagrams and multiplication 
given by concatenation of these diagrams. They have been introduced some 30 years ago independently by 
V.\,Jones \cite{Jo94} and by
P.\,Martin \cite{Ma94}. The partition algebras contain as subalgebras numerous 
other prominent algebras, like the Temperley-Lieb algebras, the Brauer algebras, the Motzkin algebras,
the group algebras of symmetric groups and many other diagram algebras. 

For all diagram algebras, there is a natural anti-involution given by
reflecting the diagrams (interchanging top and bottom nodes). This particular anti-involution is a crucial 
ingredient of the combinatorial and representation-theoretic structure of the partition algebras (and their subalgebras),
namely they are cellular algebras \cite[Theorem 4.1]{Xi99}. 

For this anti-involution, we then have a corresponding Plesken Lie algebra for all the diagram algebras
(cf. Definition \ref{def:L(A)}). The aim 
of this section is to 
determine the structure of the Plesken Lie algebra for two particular classes of diagram algebras, namely the
planar rook algebras and the Temperley-Lieb algebras (with some restriction on the parameter). 
\medskip

We start by considering the planar rook algebras. 

\begin{Definition}
For $n\in \mathbb{N}$ we consider diagrams with $n$ nodes in the top row and $n$ nodes in the bottom row. 
An {\em arc} in the diagram is connecting some node in the top row with some node in the bottom row. 
A {\em planar rook diagram} is such a diagram in which the arcs can be drawn without any crossings, and this also 
excludes common endpoints of arcs. For examples of planar rook diagrams see Figure \ref{fig:planarrookn3}.

The {\em planar rook algebra} $PR(n)$ over a field $K$ is the $K$-vector space with basis consisting of all planar 
rook diagrams with $n$ nodes in the top and bottom rows. The multiplication in $PR(n)$ is given by concatenation
of diagrams, extended linearly. For an example of a concatenation of diagrams see Figure \ref{fig:concatplanarrook}. 
\end{Definition}

\begin{figure}
\begin{center}
\begin{tikzpicture}
                 \fill[black!15!white] (0,0) rectangle (2,1);
     \foreach \n in {0,1,...,2}
        \foreach \m in {0,1}
            \node at (\n,\m)[circle,fill,inner sep=1.5pt]{};
     \draw (0,1) .. controls (0,0.5) and (0,0.5) .. (0,0);
     \draw (1,1) .. controls (1,0.5) and (1,0.5) .. (1,0);
     \draw (2,1) .. controls (2,0.5) and (2,0.5) .. (2,0);
     \fill[black!15!white] (3,0) rectangle (5,1);
     \foreach \n in {3,4,...,5}
        \foreach \m in {0,1}
            \node at (\n,\m)[circle,fill,inner sep=1.5pt]{};
     \draw (3,1) .. controls (3,0.5) and (4,0.5) .. (4,0);
     \draw (5,1) .. controls (5,0.5) and (5,0.5) .. (5,0);
     \fill[black!15!white] (6,0) rectangle (8,1);
     \foreach \n in {6,7,...,8}
        \foreach \m in {0,1}
            \node at (\n,\m)[circle,fill,inner sep=1.5pt]{};
     \draw (6,1) .. controls (6,0.5) and (6,0.5) .. (6,0);
     \draw (7,1) .. controls (7,0.5) and (8,0.5) .. (8,0);
     \fill[black!15!white] (9,0) rectangle (11,1);
     \foreach \n in {9,10,...,11}
        \foreach \m in {0,1}
            \node at (\n,\m)[circle,fill,inner sep=1.5pt]{};
     \draw (10,1) .. controls (10,0.5) and (9,0.5) .. (9,0);
     \draw (11,1) .. controls (11,0.5) and (11,0.5) .. (11,0);
     \fill[black!15!white] (12,0) rectangle (14,1);
     \foreach \n in {12,13,...,14}
        \foreach \m in {0,1}
            \node at (\n,\m)[circle,fill,inner sep=1.5pt]{};
     \draw (12,1) .. controls (12,0.5) and (12,0.5) .. (12,0);
     \draw (14,1) .. controls (14,0.5) and (13,0.5) .. (13,0);
     \fill[black!15!white] (0,-2) rectangle (2,-1);
     \foreach \n in {0,1,...,2}
        \foreach \m in {-2,-1}
            \node at (\n,\m)[circle,fill,inner sep=1.5pt]{};
     \draw (0,-1) .. controls (0,-1.5) and (0,-1.5) .. (0,-2);
     \draw (1,-1) .. controls (1,-1.5) and (1,-1.5) .. (1,-2);
     \fill[black!15!white] (3,-2) rectangle (5,-1);
     \foreach \n in {3,4,...,5}
        \foreach \m in {-2,-1}
            \node at (\n,\m)[circle,fill,inner sep=1.5pt]{};
     \draw (3,-1) .. controls (3,-1.5) and (3,-1.5) .. (3,-2);
     \draw (5,-1) .. controls (5,-1.5) and (5,-1.5) .. (5,-2);
     \fill[black!15!white] (6,-2) rectangle (8,-1);
     \foreach \n in {6,7,...,8}
        \foreach \m in {-2,-1}
            \node at (\n,\m)[circle,fill,inner sep=1.5pt]{};
     \draw (7,-1) .. controls (7,-1.5) and (7,-1.5) .. (7,-2);
     \draw (8,-1) .. controls (8,-1.5) and (8,-1.5) .. (8,-2);
     \fill[black!15!white] (9,-2) rectangle (11,-1);
     \foreach \n in {9,10,...,11}
        \foreach \m in {-2,-1}
            \node at (\n,\m)[circle,fill,inner sep=1.5pt]{};
     \draw (9,-1) .. controls (9,-1.5) and (10,-1.5) .. (10,-2);
     \draw (10,-1) .. controls (10,-1.5) and (11,-1.5) .. (11,-2);
     \fill[black!15!white] (12,-2) rectangle (14,-1);
     \foreach \n in {12,13,...,14}
        \foreach \m in {-2,-1}
            \node at (\n,\m)[circle,fill,inner sep=1.5pt]{};
     \draw (13,-1) .. controls (13,-1.5) and (12,-1.5) .. (12,-2);
     \draw (14,-1) .. controls (14,-1.5) and (13,-1.5) .. (13,-2);
     \fill[black!15!white] (0,-4) rectangle (2,-3);
     \foreach \n in {0,1,...,2}
        \foreach \m in {-4,-3}
            \node at (\n,\m)[circle,fill,inner sep=1.5pt]{};
     \draw (0,-3) .. controls (0,-3.5) and (0,-3.5) .. (0,-4);
     \fill[black!15!white] (3,-4) rectangle (5,-3);
     \foreach \n in {3,4,...,5}
        \foreach \m in {-4,-3}
            \node at (\n,\m)[circle,fill,inner sep=1.5pt]{};
     \draw (4,-3) .. controls (4,-3.5) and (4,-3.5) .. (4,-4);
     \fill[black!15!white] (6,-4) rectangle (8,-3);
     \foreach \n in {6,7,...,8}
        \foreach \m in {-4,-3}
            \node at (\n,\m)[circle,fill,inner sep=1.5pt]{};
     \draw (8,-3) .. controls (8,-3.5) and (8,-3.5) .. (8,-4);
     \fill[black!15!white] (9,-4) rectangle (11,-3);
     \foreach \n in {9,10,...,11}
        \foreach \m in {-4,-3}
            \node at (\n,\m)[circle,fill,inner sep=1.5pt]{};
     \draw (9,-3) .. controls (9,-3.5) and (10,-3.5) .. (10,-4);
     \fill[black!15!white] (12,-4) rectangle (14,-3);
     \foreach \n in {12,13,...,14}
        \foreach \m in {-4,-3}
            \node at (\n,\m)[circle,fill,inner sep=1.5pt]{};
     \draw (13,-3) .. controls (13,-3.5) and (14,-3.5) .. (14,-4);
     \fill[black!15!white] (0,-6) rectangle (2,-5);
     \foreach \n in {0,1,...,2}
        \foreach \m in {-6,-5}
            \node at (\n,\m)[circle,fill,inner sep=1.5pt]{};
     \draw (1,-5) .. controls (1,-5.5) and (0,-5.5) .. (0,-6);
     \fill[black!15!white] (3,-6) rectangle (5,-5);
     \foreach \n in {3,4,...,5}
        \foreach \m in {-6,-5}
            \node at (\n,\m)[circle,fill,inner sep=1.5pt]{};
     \draw (5,-5) .. controls (5,-5.5) and (4,-5.5) .. (4,-6);
     \fill[black!15!white] (6,-6) rectangle (8,-5);
     \foreach \n in {6,7,...,8}
        \foreach \m in {-6,-5}
            \node at (\n,\m)[circle,fill,inner sep=1.5pt]{};
     \draw (6,-5) .. controls (6,-5.5) and (8,-5.5) .. (8,-6);
     \fill[black!15!white] (9,-6) rectangle (11,-5);
     \foreach \n in {9,10,...,11}
        \foreach \m in {-6,-5}
            \node at (\n,\m)[circle,fill,inner sep=1.5pt]{};
     \draw (11,-5) .. controls (11,-5.5) and (9,-5.5) .. (9,-6);
     \fill[black!15!white] (12,-6) rectangle (14,-5);
     \foreach \n in {12,13,...,14}
        \foreach \m in {-6,-5}
            \node at (\n,\m)[circle,fill,inner sep=1.5pt]{};
        \end{tikzpicture}
        \caption{All planar rook diagrams for $n=3$.}
    \label{fig:planarrookn3}
        \end{center}
        \end{figure}
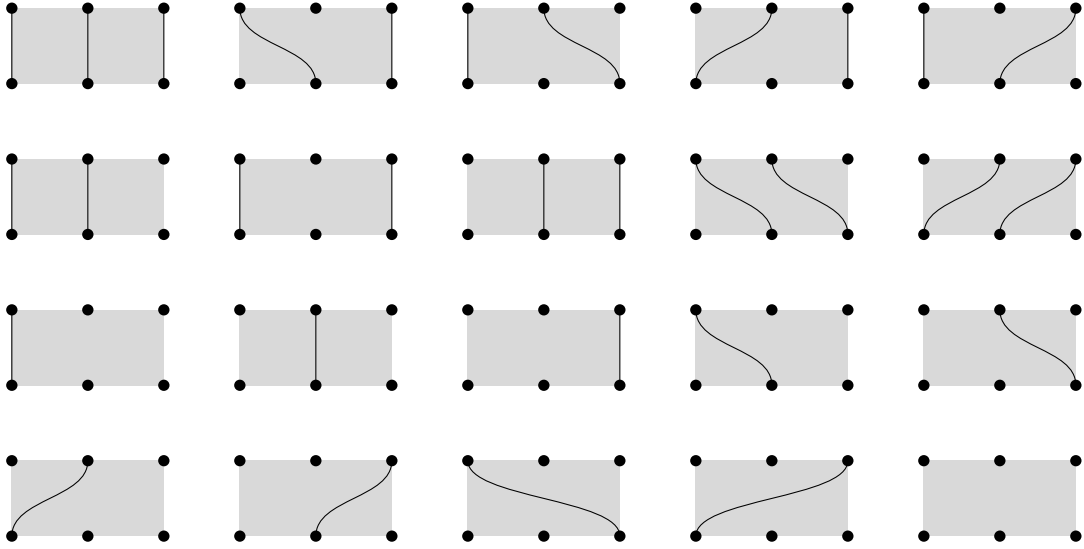

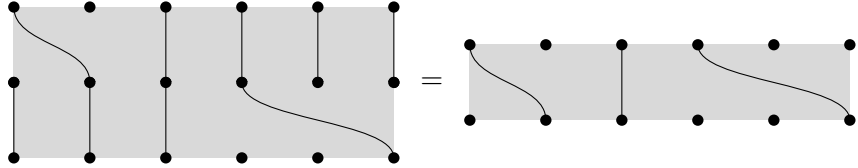
\begin{figure}
\begin{center}
    \begin{tikzpicture}
             \fill[black!15!white] (0,0) rectangle (5,1);
     \foreach \n in {0,1,...,5}
        \foreach \m in {0,1}
            \node at (\n,\m)[circle,fill,inner sep=1.5pt]{};
     \draw (0,1) .. controls (0,0.5) and (1,0.5) .. (1,0);
     \draw (2,1) .. controls (2,0.5) and (2,0.5) .. (2,0);
     \draw (3,1) .. controls (3,0.5) and (3,0.5) .. (3,0);
     \draw (4,1) .. controls (4,0.5) and (4,0.5) .. (4,0);
     \draw (5,1) .. controls (5,0.5) and (5,0.5) .. (5,0);
     \fill[black!15!white] (0,-1) rectangle (5,0);
     \foreach \n in {0,1,...,5}
        \foreach \m in {-1,0}
            \node at (\n,\m)[circle,fill,inner sep=1.5pt]{};
     \draw (0,0) .. controls (0,-0.5) and (0,-0.5) .. (0,-1);
     \draw (1,0) .. controls (1,-0.5) and (1,-0.5) .. (1,-1);
     \draw (2,0) .. controls (2,-0.5) and (2,-0.5) .. (2,-1);
     \draw (3,0) .. controls (3,-0.5) and (5,-0.5) .. (5,-1);
          \fill[black!15!white] (6,-0.5) rectangle (11,0.5);
     \foreach \n in {6,7,...,11}
        \foreach \m in {-0.5,0.5}
            \node at (\n,\m)[circle,fill,inner sep=1.5pt]{};
     \draw (6,0.5) .. controls (6,0) and (7,0) .. (7,-0.5);
     \draw (8,0.5) .. controls (8,0) and (8,0) .. (8,-0.5);
     \draw (9,0.5) .. controls (9,0) and (11,0) .. (11,-0.5);
     \node at (5.5,0){$=$};
    \end{tikzpicture}
    \caption{A concatenation of planar rook diagrams.}
    \label{fig:concatplanarrook}
\end{center}
\end{figure}


\begin{Remark} \label{rem:planarrook}
For every $n\in \mathbb{N}$ the planar rook algebra $PR(n)$ over any field $K$ is a
cellular algebra . 
\medskip

We have to show the properties (C1), (C2), (C3) from Definition \ref{def:cellular}.
\begin{enumerate}
\item[{(C1)}] Let $\Lambda = \{0,1,\ldots,n\}$ with the usual ordering 
(these numbers will refer to the number of arcs in a diagram). Since no crossings
of arcs are allowed, each planar rook diagram is uniquely determined by the end points of the arcs in the 
top and bottom rows. Thus for $\lambda\in \Lambda=\{0,1,\ldots,n\}$ we set $M(\lambda)$ as the set of
$\lambda$-element subsets of  $\Lambda=\{0,1,\ldots,n\}$. For any $\lambda$-element subset 
$s,t\in M(\lambda)$ we then take $C_{s,t}^{\lambda}$ as the diagram with $\lambda$ arcs having end points 
from $s$ in the top row and from $t$ in the bottom row. 
\item[{(C2)}] The anti-involution $i:A\to A$ is given on the basis of $PR(n)$ by swapping the
top and bottom row of the diagrams.
In particular, we have $i(C_{s,t}^{\lambda})=C_{t,s}^{\lambda}$.  
\item[{(C3)}] The multiplication in $PR(n)$ is given by concatenation of diagrams, extended linearly. 
When concatenating any diagram $C_{u,v}^{\mu}$ with $C_{s,t}^{\lambda}$ the number of arcs of the product 
$C_{u,v}^{\mu}C_{s,t}^{\lambda}$ is at most $\lambda$, and it is equal to $\lambda$ if and only if $v\supseteq s$. 
Moreover, the end points of the concatenated diagram in the bottom row will be a subset of $t$. By linear extension, this implies that
for each $a\in PR(n)$ (a linear combination of diagrams) there exist $r_a(s',s)\in K$ (independent of $t$) such 
that 
$$aC_{s,t}^{\lambda} \equiv \sum_{s'\in M(\lambda)} r_a(s',s) C_{s',t}^{\lambda}\,\mathrm{mod}\,A(<\lambda)
$$
where $A(<\lambda)$ is the subspace spanned by diagrams with fewer than $\lambda$ arcs. 
\end{enumerate}
\end{Remark}

The first application  of our Theorem \ref{thm:Plesken_cellular}
recovers a result from the second author's master thesis \cite{W25}, which there has been
obtained in a more direct way, without using the machinery of cellular algebras.

\begin{Corollary} \label{cor:planarrook}
Let $PR(n)$ be the planar rook algebra over $\mathbb{C}$ (for $n\in \mathbb{N}$). Then for the 
Plesken Lie algebra (with respect to the anti-involution given by reflecting diagrams) we have
$$\mathcal{L}(PR(n)) \cong \bigoplus_{k=0}^{n} \,\,\mathfrak{o}\left({n\choose k},\mathbb{C}\right).
$$ 
\end{Corollary}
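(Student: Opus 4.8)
The plan is to apply our main theorem, Theorem~\ref{thm:Plesken_cellular}, to the algebra $A=PR(n)$. By Remark~\ref{rem:planarrook}, $PR(n)$ is a cellular $\mathbb{C}$-algebra with a cell datum whose poset is $\Lambda=\{0,1,\ldots,n\}$ (recording the number of arcs of a diagram), whose index set $M(k)$ consists of the $k$-element subsets of the $n$-element node set $\{1,\ldots,n\}$, and whose basis element $C_{s,t}^{k}$ is the (unique, since no crossings and no common endpoints are allowed) planar rook diagram with $k$ arcs having top endpoints $s$ and bottom endpoints $t$. Consequently, by Definition~\ref{def:cellmodule}, the cell module $W(k)$ has a basis indexed by $M(k)$, so $\dim_{\mathbb{C}}W(k)=\binom{n}{k}\geq 1$; in particular no cell module vanishes, and hence $\Lambda'=\Lambda=\{0,1,\ldots,n\}$.

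Next I would verify that $PR(n)$ is semisimple over $\mathbb{C}$. By Theorem~\ref{thm:semisimplecellular} it suffices to show that the symmetric bilinear form $\phi_{k}$ on each $W(k)$ is non-degenerate, and this follows from a short inspection of diagram concatenation. Stacking $C_{s,t}^{k}$ on top of $C_{u,v}^{k}$ and identifying the middle nodes, the through-arcs of the resulting diagram correspond exactly to the common middle endpoints, i.e.\ to $t\cap u$; since $|t|=|u|=k$, this product has $k$ arcs if and only if $t=u$, in which case the arcs compose --- with no closed loops, hence no scalar --- to give precisely $C_{s,v}^{k}$, while for $t\neq u$ the product lies in $A(<k)$. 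Comparing with Definition~\ref{def:bilinearform}\,(i), this reads $\phi_{k}(t,u)=\delta_{t,u}$ for all $t,u\in M(k)$, so the Gram matrix of $\phi_{k}$ is the identity matrix, which is non-degenerate. Therefore $PR(n)$ is a semisimple cellular $\mathbb{C}$-algebra.

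Finally, Theorem~\ref{thm:Plesken_cellular} gives
\[
\mathcal{L}(PR(n))\;\cong\;\bigoplus_{\lambda\in\Lambda'}\mathfrak{o}(\dim W(\lambda),\mathbb{C})\;=\;\bigoplus_{k=0}^{n}\mathfrak{o}\!\left(\binom{n}{k},\mathbb{C}\right),
\]
as claimed. There is essentially no hard step: in contrast to Brauer or Temperley--Lieb diagrams, concatenation of planar rook diagrams never produces closed loops, so no parameter-dependent scalars enter and semisimplicity holds unconditionally; everything then reduces to the dimension count $|M(k)|=\binom{n}{k}$ together with the cell datum already recorded in Remark~\ref{rem:planarrook}. (As a consistency check, $\sum_{k=0}^{n}\binom{n}{k}^{2}=\binom{2n}{n}=\dim_{\mathbb{C}}PR(n)$, matching the Artin--Wedderburn decomposition $PR(n)\cong\bigoplus_{k=0}^{n}M\!\left(\binom{n}{k},\mathbb{C}\right)$ underlying the proof of Theorem~\ref{thm:Plesken_cellular}.)
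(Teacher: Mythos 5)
Your proposal is correct and reaches the result by the same overall route as the paper (cellularity from Remark~\ref{rem:planarrook}, semisimplicity, dimension count, then Theorem~\ref{thm:Plesken_cellular}), but it differs in how the key intermediate facts are justified. The paper simply cites \cite[Theorem 3.2]{FHH09} both for the semisimplicity of $PR(n)$ over $\mathbb{C}$ and for the classification of the simple modules $W(k)$ with $\dim W(k)={n\choose k}$, obtained there from an explicit decomposition of the regular module. You instead stay entirely inside the cellular framework: you read off $\dim W(k)=|M(k)|={n\choose k}$ directly from the cell datum, and you establish semisimplicity via Theorem~\ref{thm:semisimplecellular}\,(iii) by computing the Gram matrix of $\phi_k$. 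Your computation is sound: in a concatenation $C_{s,t}^{k}C_{u,v}^{k}$ the through-arcs are indexed by $t\cap u$, so the product has $k$ arcs precisely when $t=u$ (in which case it equals $C_{s,v}^{k}$, with no closed loops possible since planar rook arcs always join top to bottom), and lies in $A(<k)$ otherwise; hence $\phi_k(t,u)=\delta_{t,u}$ and $\phi_k$ is non-degenerate. What your version buys is self-containedness --- no appeal to \cite{FHH09} --- at the cost of a short diagrammatic verification; what the paper's version buys is brevity and an explicit link to the known representation theory of $PR(n)$. One cosmetic point: the paper's Remark~\ref{rem:planarrook} describes $M(\lambda)$ as the set of $\lambda$-element subsets of $\Lambda=\{0,1,\ldots,n\}$, which is evidently a slip for subsets of the $n$-element node set; your reading (giving $|M(k)|={n\choose k}$) is the intended one and is the count needed for the result.
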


\begin{proof}
We have seen in Remark \ref{rem:planarrook} that for every $n\in \mathbb{N}$ the algebra $PR(n)$ is cellular. 
Moreover, over $\mathbb{C}$, the algebra 
$PR(n)$ is semisimple,
see \cite[Theorem 3.2]{FHH09}. The regular module decomposes as
$$PR(n) = \bigoplus_{k=0}^n \bigoplus_{|T|=k} W_T^{n,k}
$$
where $W_T^{n,k}$ is spanned by the diagrams with exactly $k$ arcs and $T$ being the set of lower vertices 
attached to these $k$ arcs. For fixed $k$, the modules 
for different sets $T$ are isomorphic, and we denote these simple modules by $W(k)$ for $k=0,\ldots,n$. Then
$\{W(k)\,|\,0\le k\le n\}$ is a complete set of simple $PR(n)$-modules, up to isomorphism, see
\cite[Theorem 3.2\,(b)]{FHH09}, and the dimension of $W(k)$ is ${n\choose k}$ (as for a fixed set $T$ there 
are ${n\choose k}$ choices for the upper vertices attached to the $k$ arcs). Then Theorem \ref{thm:Plesken_cellular} 
implies the claim of the corollary.
\end{proof}

\medskip

Next we consider Temperley-Lieb algebras. The Temperley-Lieb algebras have strong links to mathematical physics,
where they appear in connection with the Potts models, ice-type models, or Andrews-Baxter-Forrester models.
The relevant diagrams are similar to the ones for the planar rook algebras,
but now we also allow arcs connecting nodes in the same row. 

\begin{Definition}
For $n\in \mathbb{N}$ we consider diagrams with $n$ nodes in the top row and $n$ nodes in the bottom row. 
An {\em arc} in the diagram is connecting two different nodes. 
A {\em Temperley-Lieb diagram} is such a diagram in which the arcs can be drawn without any crossings, and this also 
excludes common endpoints of arcs. For examples of Temperley-Lieb diagrams see Figure \ref{fig:temperleyliebn4}.

The {\em Temperley-Lieb algebra} $TL_{\delta}(n)$ over a field $K$ for the parameter $\delta\in K$
is the $K$-vector space with basis consisting of all Temperley-Lieb diagrams with $n$ nodes in the top and bottom rows. 
The multiplication in $TL_{\delta}(n)$ is given by linear extension of the concatenation
of diagrams, where the concatenated diagram is multiplied by $\delta^r$ if there are $r$ circles appearing in the concatenation
(which are then removed in the resulting concatenated diagram). 
For an example of a concatenation of diagrams see Figure \ref{fig:concattemperleylieb}. 
\end{Definition}

\begin{figure}
\begin{center}
\begin{tikzpicture}
\fill[black!15!white] (0,0) rectangle (3,1);
\foreach \n in {0,1,...,3}
   \foreach \m in {0,1}
       \node at (\n,\m)[circle,fill,inner sep=1.5pt]{};
\draw (0,1) .. controls (0,0.5) and (0,0.5) .. (0,0);
\draw (1,1) .. controls (1,0.5) and (1,0.5) .. (1,0);
\draw (2,1) .. controls (2,0.5) and (2,0.5) .. (2,0);
\draw (3,1) .. controls (3,0.5) and (3,0.5) .. (3,0);
\fill[black!15!white] (4,0) rectangle (7,1);
\foreach \n in {4,5,...,7}
   \foreach \m in {0,1}
       \node at (\n,\m)[circle,fill,inner sep=1.5pt]{};
\draw (4,1) .. controls (4.5,0.6) .. (5,1);
\draw (5,1) .. controls (4.5,0.6) .. (4,1);
\draw (6,1) .. controls (6,0.5) and (6,0.5) .. (6,0);
\draw (7,1) .. controls (7,0.5) and (7,0.5) .. (7,0);
\draw (4,0) .. controls (4.5,0.4) .. (5,0);
\draw (5,0) .. controls (4.5,0.4) .. (4,0);
\fill[black!15!white] (8,0) rectangle (11,1);
\foreach \n in {8,9,...,11}
   \foreach \m in {0,1}
       \node at (\n,\m)[circle,fill,inner sep=1.5pt]{};
\draw (8,1) .. controls (8,0.5) and (8,0.5) .. (8,0);
\draw (9,1) .. controls (9.5,0.6) .. (10,1);
\draw (10,1) .. controls (9.5,0.6) .. (9,1);
\draw (11,1) .. controls (11,0.5) and (11,0.5) .. (11,0);
\draw (9,0) .. controls (9.5,0.4) .. (10,0);
\draw (10,0) .. controls (9.5,0.4) .. (9,0);
\fill[black!15!white] (12,0) rectangle (15,1);
\foreach \n in {12,13,...,15}
   \foreach \m in {0,1}
       \node at (\n,\m)[circle,fill,inner sep=1.5pt]{};
\draw (12,1) .. controls (12,0.5) and (12,0.5) .. (12,0);
\draw (13,1) .. controls (13,0.5) and (13,0.5) .. (13,0);
\draw (14,1) .. controls (14.5,0.6) .. (15,1);
\draw (15,1) .. controls (14.5,0.6) .. (14,1);
\draw (14,0) .. controls (14.5,0.4) .. (15,0);
\draw (15,0) .. controls (14.5,0.4) .. (14,0);
\fill[black!15!white] (0,-2) rectangle (3,-1);
\foreach \n in {0,1,...,3}
   \foreach \m in {-2,-1}
       \node at (\n,\m)[circle,fill,inner sep=1.5pt]{};
\draw (0,-1) .. controls (0.5,-1.5) .. (1,-1);
\draw (1,-1) .. controls (0.5,-1.5) .. (0,-1);
\draw (2,-1) .. controls (2,-1.5) and (0,-1.5) .. (0,-2);
\draw (3,-1) .. controls (3,-1.5) and (3,-1.5) .. (3,-2);
\draw (1,-2) .. controls (1.5,-1.5) .. (2,-2);
\draw (2,-2.) .. controls (1.5,-1.5) .. (1,-2);
\fill[black!15!white] (4,-2) rectangle (7,-1);
\foreach \n in {4,5,...,7}
   \foreach \m in {-2,-1}
       \node at (\n,\m)[circle,fill,inner sep=1.5pt]{};
\draw (4,-1) .. controls (4.5,-1.5) .. (5,-1);
\draw (5,-1) .. controls (4.5,-1.5) .. (4,-1);
\draw (6,-1) .. controls (6.5,-1.5) .. (7,-1);
\draw (7,-1) .. controls (6.5,-1.5) .. (6,-1);
\draw (4,-2) .. controls (4.5,-1.5) .. (5,-2);
\draw (5,-2) .. controls (4.5,-1.5) .. (4,-2);
\draw (6,-2) .. controls (6.5,-1.5) .. (7,-2);
\draw (7,-2) .. controls (6.5,-1.5) .. (6,-2);
\fill[black!15!white] (8,-2) rectangle (11,-1);
\foreach \n in {8,9,...,11}
   \foreach \m in {-2,-1}
       \node at (\n,\m)[circle,fill,inner sep=1.5pt]{};
\draw (8,-1) .. controls (8,-1.5) and (10,-1.5) .. (10,-2);
\draw (9,-1) .. controls (9.5,-1.5) .. (10,-1);
\draw (10,-1) .. controls (9.5,-1.5) .. (9,-1);
\draw (11,-1) .. controls (11,-1.5) and (11,-1.5) .. (11,-2);
\draw (8,-2) .. controls (8.5,-1.5) .. (9,-2);
\draw (9,-2) .. controls (8.5,-1.5) .. (8,-2);
\fill[black!15!white] (12,-2) rectangle (15,-1);
\foreach \n in {12,13,...,15}
   \foreach \m in {-2,-1}
       \node at (\n,\m)[circle,fill,inner sep=1.5pt]{};
\draw (12,-1) .. controls (12,-1.5) and (12,-1.5) .. (12,-2);
\draw (13,-1) .. controls (13.5,-1.5) .. (14,-1);
\draw (14,-1) .. controls (13.5,-1.5) .. (13,-1);
\draw (15,-1) .. controls (15,-1.5) and (13,-1.5) .. (13,-2);
\draw (14,-2) .. controls (14.5,-1.5) .. (15,-2);
\draw (15,-2) .. controls (14.5,-1.5) .. (14,-2);
\fill[black!15!white] (0,-4) rectangle (3,-3);
\foreach \n in {0,1,...,3}
   \foreach \m in {-4,-3}
       \node at (\n,\m)[circle,fill,inner sep=1.5pt]{};
\draw (0,-3) .. controls (0,-3.5) and (0,-3.5) .. (0,-4);
\draw (1,-3) .. controls (1,-3.5) and (3,-3.5) .. (3,-4);
\draw (2,-3) .. controls (2.5,-3.5) .. (3,-3);
\draw (3,-3) .. controls (2.5,-3.5) .. (2,-3);
\draw (1,-4) .. controls (1.5,-3.5) .. (2,-4);
\draw (2,-4) .. controls (1.5,-3.5) .. (1,-4);
\fill[black!15!white] (4,-4) rectangle (7,-3);
\foreach \n in {4,5,...,7}
   \foreach \m in {-4,-3}
       \node at (\n,\m)[circle,fill,inner sep=1.5pt]{};
\draw (4,-3) .. controls (4.5,-3.5) .. (5,-3);
\draw (5,-3) .. controls (4.5,-3.5) .. (4,-3);
\draw (6,-3) .. controls (6,-3.5) and (4,-3.5) .. (4,-4);
\draw (7,-3) .. controls (7,-3.5) and (5,-3.5) .. (5,-4);
\draw (6,-4) .. controls (6.5,-3.5) .. (7,-4);
\draw (7,-4) .. controls (6.5,-3.5) .. (6,-4);
\fill[black!15!white] (8,-4) rectangle (11,-3);
\foreach \n in {8,9,...,11}
   \foreach \m in {-4,-3}
       \node at (\n,\m)[circle,fill,inner sep=1.5pt]{};
\draw (8,-3) .. controls (8.5,-3.5) .. (9,-3);
\draw (9,-3) .. controls (8.5,-3.5) .. (8,-3);
\draw (10,-3) .. controls (10.5,-3.5) .. (11,-3);
\draw (11,-3) .. controls (10.5,-3.5) .. (10,-3);
\draw (8,-4) .. controls (9.5,-3.5) .. (11,-4);
\draw (9,-4) .. controls (9.5,-3.5) .. (10,-4);
\draw (10,-4) .. controls (9.5,-3.5) .. (9,-4);
\draw (11,-4) .. controls (9.5,-3.5) .. (8,-4);
\fill[black!15!white] (12,-4) rectangle (15,-3);
\foreach \n in {12,13,...,15}
   \foreach \m in {-4,-3}
       \node at (\n,\m)[circle,fill,inner sep=1.5pt]{};
\draw (12,-3) .. controls (13.5,-3.5) .. (15,-3);
\draw (13,-3) .. controls (13.5,-3.5) .. (14,-3);
\draw (14,-3) .. controls (13.5,-3.5) .. (13,-3);
\draw (15,-3) .. controls (13.5,-3.5) .. (12,-3);
\draw (12,-4) .. controls (12.5,-3.5) .. (13,-4);
\draw (13,-4) .. controls (12.5,-3.5) .. (12,-4);
\draw (14,-4) .. controls (14.5,-3.5) .. (15,-4);
\draw (15,-4) .. controls (14.5,-3.5) .. (14,-4);
\fill[black!15!white] (0,-6) rectangle (3,-5);
\foreach \n in {0,1,...,3}
   \foreach \m in {-6,-5}
       \node at (\n,\m)[circle,fill,inner sep=1.5pt]{};
\draw (0,-5) .. controls (0,-5.5) and (2,-5.5) .. (2,-6);
\draw (1,-5) .. controls (1,-5.5) and (3,-5.5) .. (3,-6);
\draw (2,-5) .. controls (2.5,-5.5) .. (3,-5);
\draw (3,-5) .. controls (2.5,-5.5) .. (2,-5);
\draw (0,-6) .. controls (0.5,-5.5) .. (1,-6);
\draw (1,-6) .. controls (0.5,-5.5) .. (0,-6);
\fill[black!15!white] (4,-6) rectangle (7,-5);
\foreach \n in {4,5,...,7}
   \foreach \m in {-6,-5}
       \node at (\n,\m)[circle,fill,inner sep=1.5pt]{};
\draw (4,-5) .. controls (5.5,-5.5) .. (7,-5);
\draw (5,-5) .. controls (5.5,-5.5) .. (6,-5);
\draw (6,-5) .. controls (5.5,-5.5) .. (5,-5);
\draw (7,-5) .. controls (5.5,-5.5) .. (4,-5);
\draw (4,-6) .. controls (5.5,-5.5) .. (7,-6);
\draw (5,-6) .. controls (5.5,-5.5) .. (6,-6);
\draw (6,-6) .. controls (5.5,-5.5) .. (5,-6);
\draw (7,-6) .. controls (5.5,-5.5) .. (4,-6);
\end{tikzpicture}
 \caption{All Temperley-Lieb diagrams for $n=4$.}
    \label{fig:temperleyliebn4}
\end{center}
\end{figure}
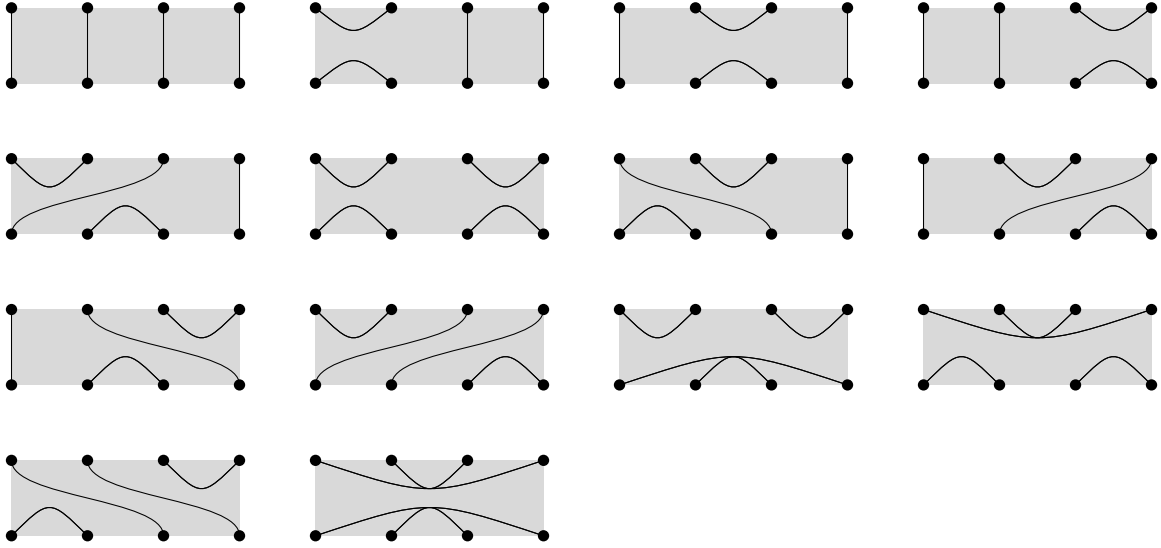

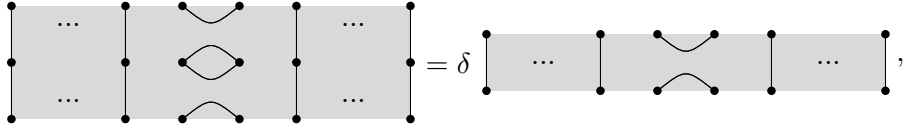
\begin{figure}
\begin{center}
    \begin{tikzpicture}
     \fill[black!15!white] (0,0) rectangle (5.25,0.75);
     \foreach \n in {0,1.5,2.25,3,3.75,5.25}
        \foreach \m in {0,0.75}
            \node at (\n,\m)[circle,fill,inner sep=1.125pt]{};
     \draw (0,0.75) .. controls (0,0.375) and (0,0.375) .. (0,0);
     \draw (1.5,0.75) .. controls (1.5,0.375) and (1.5,0.375) .. (1.5,0);
     \draw (2.25,0.75) .. controls (2.625,0.45) .. (3,0.75);
     \draw (3,0.75) .. controls (2.625,0.45) .. (2.25,0.75);
     \draw (3.75,0.75) .. controls (3.75,0.375) and (3.75,0.375) .. (3.75,0);
     \draw (5.25,0.75) .. controls (5.25,0.375) and (5.25,0.375) .. (5.25,0);
     \draw (2.25,0) .. controls (2.625,0.3) .. (3,0);
     \draw (3,0) .. controls (2.625,0.3) .. (2.25,0);
     \fill[black!15!white] (0,-0.75) rectangle (5.25,0);
     \foreach \n in {0,1.5,2.25,3,3.75,5.25}
        \foreach \m in {-0.75,0}
            \node at (\n,\m)[circle,fill,inner sep=1.125pt]{};
     \draw (0,0) .. controls (0,-0.375) and (0,-0.375) .. (0,-0.75);
     \draw (1.5,0) .. controls (1.5,-0.375) and (1.5,-0.375) .. (1.5,-0.75);
     \draw (2.25,0) .. controls (2.625,-0.3) .. (3,0);
     \draw (3,0) .. controls (2.625,-0.3) .. (2.25,0);
     \draw (3.75,0) .. controls (3.75,-0.375) and (3.75,-0.375) .. (3.75,-0.75);
     \draw (5.25,0) .. controls (5.25,-0.375) and (5.25,-0.375) .. (5.25,-0.75);
     \draw (2.25,-0.75) .. controls (2.625,-0.45) .. (3,-0.75);
     \draw (3,-0.75) .. controls (2.625,-0.45) .. (2.25,-0.75);

          \fill[black!15!white] (6.25,-0.375) rectangle (11.5,0.375);
     \foreach \n in {6.25,7.75,8.5,9.25,10,11.5}
        \foreach \m in {-0.375,0.375}
            \node at (\n,\m)[circle,fill,inner sep=1.125pt]{};
     \draw (6.25,0.375) .. controls (6.25,0) and (6.25,0) .. (6.25,-0.375);
     \draw (7.75,0.375) .. controls (7.75,0) and (7.75,0) .. (7.75,-0.375);
     \draw (8.5,0.375) .. controls (8.875,0.075) .. (9.25,0.375);
     \draw (9.25,0.375) .. controls (8.875,0.075) .. (8.5,0.375);
     \draw (10,0.375) .. controls (10,0) and (10,0) .. (10,-0.375);
     \draw (11.5,0.375) .. controls (11.5,0) and (11.5,0) .. (11.5,-0.375);
     \draw (8.5,-0.375) .. controls (8.875,-0.075) .. (9.25,-0.375);
     \draw (9.25,-0.375) .. controls (8.875,-0.075) .. (8.5,-0.375);
     \node at (5.75,0){$=\delta$};
     \node at (0.75,0.5){...};
     \node at (0.75,-0.5){...};
     \node at (4.5,0.5){...};
     \node at (4.5,-0.5){...};
     \node at (7,0){...};
     \node at (10.75,0){...};
     \node at (11.7,0){,};
    \end{tikzpicture}
    \caption{A concatenation of Temperley-Lieb diagrams in $TL_{\delta}(n)$.}
    \label{fig:concattemperleylieb}
\end{center}
\end{figure}

The representation theory of the Temperley-Lieb algebras has been described in \cite{RSA14}. 
Based on this we can get the structure of a large class of semisimple Temperley-Lieb algebras 
as a second application of our Theorem \ref{thm:Plesken_cellular}.

\begin{Corollary} \label{cor:PleskenTL}
Let $\delta = q+q^{-1}$, where $q\in \mathbb{C}\setminus \{0\}$ is not a root of unity. Then for all $n\in 
\mathbb{N}$ the Temperley-Lieb algebra $TL_{\delta}(n)$ is semisimple and for its Plesken Lie algebra we get
$$\mathcal{L}(TL_{\delta}(n)) \cong \bigoplus_{p=0}^{\lfloor \frac{n}{2}\rfloor} \,\,
\mathfrak{o}\left( {n\choose p}-{n\choose p-1},\mathbb{C}\right).
$$ 
\end{Corollary}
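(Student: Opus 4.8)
The plan is to combine Theorem \ref{thm:Plesken_cellular} with two known facts about Temperley--Lieb algebras: their cellularity and the classification of their simple modules. First I would recall that for every $n$ the Temperley--Lieb algebra $TL_\delta(n)$ is cellular, with the anti-involution given by reflecting diagrams (this is a special case of the cellularity of partition algebras, \cite[Theorem 4.1]{Xi99}, and is also due to \cite{GL96}); the poset $\Lambda$ consists of the possible numbers $p$ of through-strands, which (by a parity argument, since in a Temperley--Lieb diagram the strands that are not through-strands come in pairs within each row) runs over $p\in\{n,n-2,n-4,\dots\}$, equivalently is indexed by $p=0,1,\dots,\lfloor n/2\rfloor$ where $p$ counts the number of cups in the top row, and the cell module $W_p$ has as basis the half-diagrams (link patterns) with $p$ cups on $n$ nodes. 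A standard count gives $\dim W_p = \binom{n}{p}-\binom{n}{p-1}$.

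Next I would invoke the representation theory from \cite{RSA14}: when $\delta=q+q^{-1}$ with $q$ not a root of unity, $TL_\delta(n)$ is semisimple, and the cell modules $W_p$ for $p=0,1,\dots,\lfloor n/2\rfloor$ are precisely the pairwise non-isomorphic simple modules (equivalently, by Theorem \ref{thm:semisimplecellular}, each cellular bilinear form $\phi_p$ is non-degenerate). In particular $\Lambda'=\Lambda$, i.e.\ no cell module vanishes. With this in hand, Theorem \ref{thm:Plesken_cellular} applies directly and yields
$$\mathcal{L}(TL_{\delta}(n)) \cong \bigoplus_{p=0}^{\lfloor n/2\rfloor} \mathfrak{o}\big(\dim W_p,\mathbb{C}\big)
= \bigoplus_{p=0}^{\lfloor n/2\rfloor} \mathfrak{o}\Big(\binom{n}{p}-\binom{n}{p-1},\mathbb{C}\Big),$$
which is the assertion.

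The one point requiring genuine care — the main obstacle — is matching up the combinatorial bookkeeping correctly: confirming that the parameter set for the cell modules is $\{0,1,\dots,\lfloor n/2\rfloor\}$ (not, say, indexed directly by through-strand numbers with a parity constraint, which would only change the labels but must be stated consistently), and that the dimension of the $p$-th cell module is indeed $\binom{n}{p}-\binom{n}{p-1}$. I would pin down $\dim W_p$ either by the standard ballot-type count of non-crossing partial matchings of $n$ points with exactly $p$ arcs and the other $n-2p$ points free, or by citing the dimension formula in \cite{RSA14}; either way this is a routine verification once the indexing convention is fixed. Everything else is a direct citation of earlier results: cellularity from \cite{Xi99}/\cite{GL96}, semisimplicity and the simple-module classification from \cite{RSA14}, and the structural statement from Theorem \ref{thm:Plesken_cellular}.
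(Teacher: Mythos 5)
Your proposal is correct and follows essentially the same route as the paper's proof: cite cellularity of $TL_{\delta}(n)$ with the diagram-reflection anti-involution, invoke \cite{RSA14} for semisimplicity and for the classification of the simple modules indexed by $p\in\{0,1,\ldots,\lfloor n/2\rfloor\}$ with dimensions $\binom{n}{p}-\binom{n}{p-1}$, and then apply Theorem \ref{thm:Plesken_cellular}. The additional combinatorial detail you supply (link patterns, the ballot-type count of $\dim W_p$) is sound but goes beyond what the paper does, which simply cites the dimension formula from \cite{RSA14}.
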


\begin{proof}
All Temperley-Lieb algebras are known to be cellular, where the anti-involution is given by 
swapping top and bottom rows in the diagrams \cite[Theorem 6.7]{GL96}. 
For the specific choice of the parameter $q$ given in the corollary, the Temperley-Lieb algebras $TL_{\delta}(n)$ are semisimple 
\cite[Corollary 4.6]{RSA14}. A complete set of simple modules has been described in \cite[Corollary 4.6]{RSA14}, 
they are indexed by an integer
$p\in \{0,1,\ldots,\lfloor \frac{n}{2}\rfloor\}$, and their
dimensions are given by ${n\choose p}-{n\choose p-1}$ \cite[Equation (2.9)]{RSA14}. Then the claim follows by
applying Theorem \ref{thm:Plesken_cellular}.
\end{proof}


\begin{Remark}
The assumption in Theorem \ref{thm:Plesken_cellular} that the cellular algebra $A$ is semisimple is
necessary. As an example, consider the Temperley-Lieb algebra $TL_0(4)$ over $\mathbb{C}$ for the parameter $\delta=0$
(note that this case is not covered by Corollary \ref{cor:PleskenTL}). The Plesken Lie algebra
$\mathcal{L}(TL_0(4))$  has dimension 4, with basis elements
\begin{center}
     \begin{tikzpicture}
        \fill[black!15!white] (0,0) rectangle (3,1);
\foreach \n in {0,1,...,3}
   \foreach \m in {0,1}
       \node at (\n,\m)[circle,fill,inner sep=1.5pt]{};
\draw (0,1) .. controls (0.5,0.6) .. (1,1);
\draw (1,1) .. controls (0.5,0.6) .. (0,1);
\draw (2,1) .. controls (2,0.5) and (0,0.5) .. (0,0);
\draw (3,1) .. controls (3,0.5) and (3,0.5) .. (3,0);
\draw (1,0) .. controls (1.5,0.4) .. (2,0);
\draw (2,0) .. controls (1.5,0.4) .. (1,0);
\node at (3.5,0.5) {-};
\node at (-0.5,0.5){$b_1=$};
\fill[black!15!white] (4,0) rectangle (7,1);
\foreach \n in {4,5,...,7}
   \foreach \m in {0,1}
       \node at (\n,\m)[circle,fill,inner sep=1.5pt]{};
\draw (4,1) .. controls (4,0.5) and (6,0.5) .. (6,0);
\draw (5,1) .. controls (5.5,0.6) .. (6,1);
\draw (6,1) .. controls (5.5,0.6) .. (5,1);
\draw (7,1) .. controls (7,0.5) and (7,0.5) .. (7,0);
\draw (4,0) .. controls (4.5,0.4) .. (5,0);
\draw (5,0) .. controls (4.5,0.4) .. (4,0);
\fill[black!15!white] (0,-1.5) rectangle (3,-0.5);
\foreach \n in {0,1,...,3}
   \foreach \m in {-1.5,-0.5}
       \node at (\n,\m)[circle,fill,inner sep=1.5pt]{};
\draw (0,-0.5) .. controls (0,-1) and (0,-1) .. (0,-1.5);
\draw (1,-0.5) .. controls (1.5,-0.9) .. (2,-0.5);
\draw (2,-0.5) .. controls (1.5,-0.9) .. (1,-0.5);
\draw (3,-0.5) .. controls (3,-1) and (1,-1) .. (1,-1.5);
\draw (2,-1.5) .. controls (2.5,-1.1) .. (3,-1.5);
\draw (3,-1.5) .. controls (2.5,-1.1) .. (2,-1.5);
\node at (3.5,-1) {-};
\node at (-0.5,-1){$b_2=$};
\fill[black!15!white] (4,-1.5) rectangle (7,-0.5);
\foreach \n in {4,5,...,7}
   \foreach \m in {-1.5,-0.5}
       \node at (\n,\m)[circle,fill,inner sep=1.5pt]{};
\draw (4,-0.5) .. controls (4,-1) and (4,-1) .. (4,-1.5);
\draw (5,-0.5) .. controls (5,-1) and (7,-1) .. (7,-1.5);
\draw (6,-0.5) .. controls (6.5,-0.9) .. (7,-0.5);
\draw (7,-0.5) .. controls (6.5,-0.9) .. (6,-0.5);
\draw (5,-1.5) .. controls (5.5,-1.1) .. (6,-1.5);
\draw (6,-1.5) .. controls (5.5,-1.1) .. (5,-1.5);
\end{tikzpicture}
\end{center}
~\vskip0.3cm 

\begin{center}
\begin{tikzpicture}
\fill[black!15!white] (0,-3) rectangle (3,-2);
\foreach \n in {0,1,...,3}
   \foreach \m in {-3,-2}
       \node at (\n,\m)[circle,fill,inner sep=1.5pt]{};
\draw (0,-2) .. controls (0.5,-2.4) .. (1,-2);
\draw (1,-2) .. controls (0.5,-2.4) .. (0,-2);
\draw (2,-2) .. controls (2,-2.5) and (0,-2.5) .. (0,-3);
\draw (3,-2) .. controls (3,-2.5) and (1,-2.5) .. (1,-3);
\draw (2,-3) .. controls (2.5,-2.6) .. (3,-3);
\draw (3,-3) .. controls (2.5,-2.6) .. (2,-3);
\node at (3.5,-2.5) {-};
\node at (-0.5,-2.5){$b_3=$};
\fill[black!15!white] (4,-3) rectangle (7,-2);
\foreach \n in {4,5,...,7}
   \foreach \m in {-3,-2}
       \node at (\n,\m)[circle,fill,inner sep=1.5pt]{};
\draw (4,-2) .. controls (4,-2.5) and (6,-2.5) .. (6,-3);
\draw (5,-2) .. controls (5,-2.5) and (7,-2.5) .. (7,-3);
\draw (6,-2) .. controls (6.5,-2.4) .. (7,-2);
\draw (4,-3) .. controls (4.5,-2.6) .. (5,-3);
\fill[black!15!white] (0,-4.5) rectangle (3,-3.5);
\foreach \n in {0,1,...,3}
   \foreach \m in {-4.5,-3.5}
       \node at (\n,\m)[circle,fill,inner sep=1.5pt]{};
\draw (0,-3.5) .. controls (0.5,-3.9) .. (1,-3.5);
\draw (1,-3.5) .. controls (0.5,-3.9) .. (0,-3.5);
\draw (2,-3.5) .. controls (2.5,-3.9) .. (3,-3.5);
\draw (3,-3.5) .. controls (2.5,-3.9) .. (2,-3.5);
\draw (0,-4.5) .. controls (1.5,-4.1) .. (3,-4.5);
\draw (1,-4.5) .. controls (1.5,-4.1) .. (2,-4.5);
\draw (2,-4.5) .. controls (1.5,-4.1) .. (1,-4.5);
\draw (3,-4.5) .. controls (1.5,-4.1) .. (0,-4.5);
\node at (3.5,-4) {-};
\node at (-0.5,-4){$b_4=$};
\fill[black!15!white] (4,-4.5) rectangle (7,-3.5);
\foreach \n in {4,5,...,7}
   \foreach \m in {-4.5,-3.5}
       \node at (\n,\m)[circle,fill,inner sep=1.5pt]{};
\draw (4,-3.5) .. controls (5.5,-3.9) .. (7,-3.5);
\draw (5,-3.5) .. controls (5.5,-3.9) .. (6,-3.5);
\draw (6,-3.5) .. controls (5.5,-3.9) .. (5,-3.5);
\draw (7,-3.5) .. controls (5.5,-3.9) .. (4,-3.5);
\draw (4,-4.5) .. controls (4.5,-4.1) .. (5,-4.5);
\draw (5,-4.5) .. controls (4.5,-4.1) .. (4,-4.5);
\draw (6,-4.5) .. controls (6.5,-4.1) .. (7,-4.5);
\draw (7,-4.5) .. controls (6.5,-4.1) .. (6,-4.5);   
    \end{tikzpicture}
\end{center}   

A lengthy, but straightforward calculation by concatenating diagrams yields the following Lie bracket multiplication 
table for the Plesken Lie algebra. 

\begin{center}
\begin{tabular}{|c||c|c|c|c|}
\hline 
$[\,.\,,\,.\,]$ & $b_1$ & $b_2$ & $b_3$ & $b_4$ \\
 \hline\hline
 $b_1$ & $0$ & $-b_1-b_2$ & $b_3-b_4$ & $0$ \\ \hline
 $b_2$ & $b_1+b_2$ & $0$ & $-b_3-b_4$ & $0$ \\ \hline
 $b_3$ & $-b_3+b_4$ & $b_3+b_4$ & $0$ & $0$ \\ \hline
 $b_4$ & $0$ & $0$ & $0$ & $0$ \\
 \hline
 \end{tabular}
\end{center}
\medskip

Recall that the derived series of a Lie algebra is obtained by repeatedly taking commutators. 
From the multiplication table we 
determine the derived subalgebras of $\mathcal{L}(TL_0(4))$ as
\begin{eqnarray*}
(\mathcal{L}(TL_0(4)))' & = & \mathrm{span}(b_1+b_2,b_3,b_4) \\
(\mathcal{L}(TL_0(4)))'' & = & \mathrm{span}(b_4) \\
(\mathcal{L}(TL_0(4)))''' & = & 0 
 \end{eqnarray*}
Thus, the Plesken Lie algebra of $TL_0(4)$ is a solvable Lie algebra of derived length 3. This implies 
that it can not be a direct sum of orthogonal Lie algebras as in Theorem \ref{thm:Plesken_cellular}
(since $\mathfrak{o}(r,\mathbb{C})' =\mathfrak{o}(r,\mathbb{C})$ for $r\ge 3$ and 
  $\mathfrak{o}(r,\mathbb{C})'=0$ for $r=1,2$, so if a direct sum of orthogonal Lie algebras
  is solvable then of derived length 1). In particular, our calculations together with Theorem \ref{thm:Plesken_cellular}
 also show that the 
  cellular algebra $TL_0(4)$ is not semisimple. 
\end{Remark}


\end{document}